\newbox\mybox
\def\overtag#1#2#3{\setbox\mybox\hbox{$#1$}\hbox to
  0pt{\vbox to 0pt{\vglue-#3\vglue-\ht\mybox\hbox to \wd\mybox
      {\hss$\ss#2$\hss}\vss}\hss}\box\mybox}
\def\undertag#1#2#3{\setbox\mybox\hbox{$#1$}\hbox to 0pt{\vbox to
    0pt{\vglue#3\vglue\ht\mybox\hbox to \wd\mybox
      {\hss$\ss#2$\hss}\vss}\hss}\box\mybox}
\def\lefttag#1#2#3{\hbox to 0pt{\vbox to 0pt{\vglue -6pt\hbox to
      0pt{\hss$\ss#2$\hskip#3}\vss}}#1}
\def\righttag#1#2#3{\hbox to 0pt{\vbox to 0pt{\vglue -6pt\hbox to
      0pt{\hskip#3$\ss#2$\hss}\vss}}#1}
\let\ss\scriptstyle
\def\splicediag#1#2{\xymatrix@R=#1pt@C=#2pt@M=0pt@W=0pt@H=0pt}
\def\Dot{\lower.2pc\hbox to 2pt{\hss$\bullet$\hss}}
\def\Circ{\lower.2pc\hbox to 2pt{\hss$\circ$\hss}}
\def\Vdots{\raise5pt\hbox{$\vdots$}}
\newcommand\lineto{\ar@{-}}
\newcommand\dashto{\ar@{--}}
\newcommand\dotto{\ar@{.}}
\newcommand{\al}{{\alpha}}
\newcommand{\p}{{\partial}}
\newcommand{\s}{{\sigma}}
\newcommand{\IN}{{\mathbb N}}
\newcommand{\IR}{{\mathbb R}}
\newcommand{\IC}{{\mathbb C}}
\newcommand{\cC}{{\mathcal C}}
\newcommand{\cO}{{\mathcal O}}
\newtheorem{theorem}{Theorem}[section]
\newtheorem{proposition}[theorem]{Proposition}
\newtheorem*{theorem*}{Theorem}
\newtheorem{lemma}[theorem]{Lemma}
\theoremstyle{definition}
\newtheorem{remark}[theorem]{Remark}
\newtheorem*{remark*}{Remark}
\newtheorem*{claim*}{Claim}
\newtheorem{definition}[theorem]{Definition}
\newtheorem{proposition-definition}[theorem]{Proposition-definition}
\begin{document}
\title[Sharp estimates for blowing down Denjoy-Carleman functions]{Sharp estimates for blowing down functions in a Denjoy-Carleman class}

\author[A.~Belotto da Silva]{Andr\'{e} Belotto da Silva}
\author[E.~Bierstone]{Edward Bierstone}
\author[A.~Kiro]{Avner Kiro}

\address[A.~Belotto da Silva]{Universit\'e Aix-Marseille, Institut de Math\'ematiques de Marseille
(UMR CNRS 7373), Centre de Math\'ematiques et Informatique, 39 rue F. Joliot Curie, 
13013 Marseille, France}
\email[A.~Belotto da Silva]{andre-ricardo.belotto-da-silva@univ-amu.fr}
\address[E.~Bierstone]{University of Toronto, Department of Mathematics, 40 St. George Street, Toronto, ON, Canada M5S 2E4}
\email[E.~Bierstone]{bierston@math.utoronto.ca}
\thanks{Research supported by NSERC Discovery Grant RGPIN-2017-06537 (Bierstone)}
\address[A.~Kiro]{Weizmann Institute of Science,Faculty of Mathematics and Computer Science, Department of Mathematics, Rehovot, Israel}
\email[A.~Kiro]{avner-ephraiem.kiro@weizmann.ac.il}
\thanks{Research supported in part by ERC Advanced Grant 692616, ISF Grant 382/15, and by the European Research Council (ERC) under the European Union’s Horizon 2020 research and innovation programme, grant agreement No 802107 (Kiro).}
\thanks{{The authors gratefully acknowledge the referee's careful review and important comments.}}

\subjclass[2010]{Primary 26E10, 32S45; Secondary 30D60, 58C25.}

\keywords{Denjoy-Carleman class, quasianalytic class, blowing-up, resolution of singularities, loss of regularity, power substitution.}



\begin{abstract} 
If $F$ is a $C^{\infty}$ function whose composition $F \circ \sigma$ with a blowing-up $\s$ belongs to a Denjoy-Carleman class $C_M$, then $F$, in general, belongs to a larger class $C_{M^{(2)}}$; i.e., there is a loss of regularity. We show that this loss of regularity is sharp. In particular, loss of regularity of Denjoy-Carleman classes is intrinsic to arguments involving resolution of singularities.
\end{abstract}

\maketitle

\section{Introduction}
Quasianalytic Denjoy-Carleman classes $C_M$ go back to E. Borel \cite{Borel} and were characterized (following questions of Hadamard that arose from work of Holmgren on the heat equation) by the Denjoy-Carleman theorem \cite{Den,Carl}. We recall that $C_M$ is a class of $C^{\infty}$ functions with bounds on derivatives determined by a logarithmically convex
sequence $M =(M_k)_{k\in \mathbb{N}}$. Quasianalytic classes arise in model theory as the classes of 
$C^{\infty}$ functions that are definable in a given polynomially-bounded o-minimal structure \cite{Mil,RSW}. Relevant background on Denjoy-Carleman and quasianalytic classes is presented in Section \ref{sec:DC} below.

Given a (log convex) sequence $M= (M_k)_{k \in \mathbb{N}}$ and a positive integer $p$, 
we denote by {$M^{(p)}$ the shifted sequence $(M_{pk})_{k\in \mathbb{N}}$, and by $M^p$
the sequence of $p$th powers $(M_k^p)_{k\in \mathbb{N}}$. In general,} $C_M \subseteq C_{M^p} \subseteq C_{M^{(p)}}$; {moreover, $C_M = C_{M^{(p)}}$ if and only if} $C_M$ is the class of analytic functions. We recall that the shifted class $C_{M^{(2)}}$ is the smallest Denjoy-Carleman class containing all $g \in \cC^\infty([0,\infty))$ such that $g(t^2)\in C_M(\IR)$ \cite[Rmk.\,6.2]{Nelim}, {cf. \cite{BKS2020}}.

The goal of this paper is to extend this {result}  to \emph{blowings-up}. {We can express the blowing up of the origin in the plane $\IR^2$ using polar coordinates, as the mapping $\sigma: \mathbb{R}^2 \to \mathbb{R}^2$ given by
\[
\s(r,\theta) = (r\cos \theta,\,r \sin \theta)
\]
(the universal covering of the standard blowing-up).}
{If $F \in C^{\infty}(\mathbb{R}^2)$ is a function} such that $F \circ \sigma \in C_M(\mathbb{R}^2)$, 
{then $F \in C_{M^{(2)}}(\mathbb{R}^2)$}, by \cite[Lemma 3.4]{BBB}. {We will} show that this estimate is sharp.

\begin{theorem}[Sharp estimate {for blowing up}]\label{thm:Main}
Let $C_M$ {be} a Denjoy-Carleman class closed {under} differentiation, such that $C_{M^2}=C_{M^{(2)}}$. {Then, for every Denjoy-Carleman} class $C_{N} \subsetneq C_{M^{(2)}}$, there exists 
$F \in C_{M^{(2)}}(\mathbb{R}^2) \setminus C_N(\mathbb{R}^2)$ such that $F\circ \sigma \in C_M(\mathbb{R}^2)$.
\end{theorem}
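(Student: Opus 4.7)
The plan is to reduce Theorem~\ref{thm:Main} to the one-dimensional sharpness recalled in the introduction, via a radial lift. By the 1D sharp result \cite[Rmk.\,6.2]{Nelim} (cf.\ \cite{BKS2020}), since $C_N \subsetneq C_{M^{(2)}}$ there exists $g \in \cC^\infty([0,\infty))$ with $g(t^2) \in C_M(\IR)$ and $g \notin C_N([0,\infty))$. I would then set $F(x,y) := g(x^2+y^2)$ and verify the three required properties.

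First, $F \circ \sigma(r,\theta) = g(r^2)$ lies in $C_M(\IR^2)$ directly from the choice of $g$, viewed as a $C_M$ function of $r$ that is independent of $\theta$. Second, $F \in C_{M^{(2)}}(\IR^2)$: the same 1D fact gives $g \in C_{M^{(2)}}([0,\infty))$, and the hypothesis $C_{M^2} = C_{M^{(2)}}$ ensures $C_{M^{(2)}}$ is closed under composition with the analytic map $(x,y) \mapsto x^2+y^2$, whence $F \in C_{M^{(2)}}(\IR^2)$. Third, $F \notin C_N(\IR^2)$: arguing by contradiction, start from the expansion
\[
F(x,y) = \sum_{n \geq 0} \frac{g^{(n)}(0)}{n!}\,(x^2+y^2)^n,
\]
so that a $C_N$-bound on $\partial_x^{2n} F(0) = g^{(n)}(0)\cdot (2n)!/n!$ would force $|g^{(n)}(0)| \leq C^n\, N_{2n}\, n!/(2n)!$; I would then show that this is incompatible with $g \notin C_N([0,\infty))$ for the explicit $g$ provided by 1D sharpness.

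The main obstacle is this last step. The naive restriction of $F$ to a line through the origin only yields $g \in C_{N^{(2)}}$, which is weaker than $g \in C_N$. To close the gap I would combine (i) the specific form of the $g$ coming from 1D sharpness, whose Taylor coefficients at $0$ saturate the sharp bound $|g^{(n)}(0)| \lesssim M_{2n}/n!$ imposed by $g(t^2) \in C_M$, with (ii) the hypothesis $C_{M^2} = C_{M^{(2)}}$ to convert the combinatorial factor $(2n)!/n!$ into a clean comparison between the sequences $N$ and $M^{(2)}$. Should the pure radial lift fall short, I would split cases: when $C_M \not\subseteq C_N$ the simpler lift $F(x,y) := f(x)$ with $f \in C_M(\IR) \setminus C_N(\IR)$ already works, so the effort concentrates on the case $C_M \subseteq C_N \subsetneq C_{M^{(2)}}$, where the radial construction can be augmented by a polynomial angular factor such as $\mathrm{Re}((x+iy)^{n_k})$, tuned so that the Fourier--Taylor coefficients of $F \circ \sigma$ remain in $C_M$ while the Cartesian Taylor coefficients of $F$ at the origin violate the $C_N$-bound.
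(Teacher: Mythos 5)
The radial lift $F(x,y)=g(x^2+y^2)$ cannot work in the essential case $C_M\subseteq C_N\subsetneq C_{M^{(2)}}$ (e.g.\ $N=M$, exactly the content of Theorem~\ref{thm:Main2}), and this is the case your outline leaves unresolved. Indeed, since $F$ is $O(2)$-invariant, the only nonzero Cartesian derivatives at the origin are
\[
\partial_x^{2a}\partial_y^{2b}F(0)=\frac{g^{(a+b)}(0)}{(a+b)!}\binom{a+b}{a}(2a)!\,(2b)!,
\]
and the hypothesis $g(t^2)\in C_M(\IR)$ gives $|g^{(n)}(0)|/n!=|h^{(2n)}(0)|/(2n)!\leq A B^{2n}M_{2n}$, so together with $\binom{n}{a}\leq 2^n$ one already has the $C_M$-bound at $0$; if moreover $C_M\subseteq C_N$ then $M_{2n}\leq C^{2n}N_{2n}$ and the origin derivatives are within the $C_N$-bound. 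Away from $0$, $g(s)=h(\sqrt{s})$ is a composition of a $C_M$ function with an analytic one, so $F$ is of class $C_M\subseteq C_N$ locally there. There is thus no derivative of a pure radial lift that one could hope to catch outside $C_N$ in this regime; your constraint ``$|g^{(n)}(0)|\lesssim N_{2n}\,n!/(2n)!$'' is actually the $C_{N^{(2)}}$-bound on $g$, which is weaker than $g\in C_N$ and is automatically satisfied, so no contradiction can come from it. This is exactly the phenomenon the introduction flags: the loss of regularity ``cannot be observed by sampling the function on a nonsingular curve,'' and every line through the origin sees only the one-dimensional profile $h$, which is already $C_M$. The proposed rescue via an angular factor $\mathrm{Re}((x+iy)^{n_k})$ is left entirely to heuristics with no estimates, and a polynomial factor does not change the Denjoy--Carleman class in the way that would be needed, so the gap is not closed.

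The paper's construction is genuinely two-variable and, crucially, \emph{anisotropic}. The base function in Proposition~\ref{prop: h def} is $h(x)=\sum_k \frac{m_k^2}{2^k\varphi(m_k)}\,\frac{1}{1+x_1^2+(m_k x_2)^2}$, a sum of bricks that are stretched by $m_k=M_{k+1}/M_k$ along $x_2$ but not along $x_1$; this forces $|\partial_{x_2}^{2n}h(0)|\geq (2n)!\,M_{2n}/2^{2n}$ while keeping all derivatives within $C_M$ bounds. The main function $F_\Gamma$ then superposes translates $f_{q_\lambda,\rho_\lambda}(x)=h(x/\rho_\lambda-q_\lambda)$ centred at points $(E(\rho_\lambda),0)\to 0$ on the $x_1$-axis and rescaled by $\rho_\lambda=M_\lambda/M_{\lambda+1}$; the rescaling amplifies the $x_2$-derivative at the centre by $\rho_\lambda^{-2n}$, yielding the lower bound of Lemma~\ref{lem: upper bound derivative}, which under $C_{M^2}=C_{M^{(2)}}$ (or on the index set supplied by Lemma~\ref{lem: extrareg2}) rules out membership in any $C_N\subsetneq C_{M^{(2)}}$. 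After composing with $\sigma$, each $\theta$-derivative at radius $r=E(\rho_\lambda)$ brings a factor $r$ which cancels the $\rho_\lambda^{-1}$ amplification, so $F_\Gamma\circ\sigma$ stays in $C_M$ (Lemmas~\ref{lem: base function der in polar coor}--\ref{lem: G upper bound}). This delicate interplay between the anisotropic stretching, the off-origin centres, and the Jacobian of the blow-up has no counterpart in the rotationally symmetric lift $g(x^2+y^2)$, and is the actual content of the proof. I would recommend abandoning the radial reduction and engaging directly with the construction of \S\ref{sec:Construction}.
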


{Moreover}, under the hypothesis that the class $C_M$ is quasianalytic, our techniques {provide} 
the following result.

\begin{theorem}[Quasianalytic estimate for  blowing up]\label{thm:Main2}
Let $C_M$ be a quasianalytic Denjoy-Carleman class closed {under} differentiation. For every
{Denjoy-Carleman}
class $C_{N} \subsetneq C_{M^{(2)}}$ such that $\lim (N_k/M_{2k})^{1/k} = 0$, there exists $F \in C_{M^{(2)}}(\mathbb{R}^2) \setminus C_N(\mathbb{R}^2)$ such that $F\circ \sigma \in C_M(\mathbb{R}^2)
$. 

In particular, if $C_M$ properly contains the analytic functions, {then} there exists $F \in C_{M^{(2)}}(\mathbb{R}^2) \setminus C_M(\mathbb{R}^2)$ such that $F \circ \sigma \in C_M(\mathbb{R}^2)$.
\end{theorem}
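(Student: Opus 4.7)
The plan is to reduce to a one-variable construction by taking $F$ radial: $F(x,y)=g(x^2+y^2)$ for a carefully chosen $g\in C^\infty([0,\infty))$ pulled back from an even $f\in C_M(\IR)$ via $f(t)=g(t^2)$. Under this ansatz, $F\circ\sigma(r,\theta)=g(r^2)=f(r)$ depends only on $r$, so $F\circ\sigma\in C_M(\IR^2)$ reduces to $f\in C_M(\IR)$. The shifted-class characterisation \cite[Rmk.\,6.2]{Nelim} then gives $g\in C_{M^{(2)}}$ for free, and since $C_{M^{(2)}}$ is a DC class closed under differentiation, composition with the analytic polynomial $x^2+y^2$ yields $F\in C_{M^{(2)}}(\IR^2)$ without further work.

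The heart of the argument is to engineer $f$ so that $F\notin C_N(\IR^2)$. A Fa\`a di Bruno computation gives
\[
\partial_y^{2k}F(x,0)=\frac{(2k)!}{k!}\,g^{(k)}(x^2),
\]
so $F\in C_N$ in a neighbourhood of the origin would force the uniform estimate $|g^{(k)}(s)|\le CR^{2k}N_{2k}\,k!/(2k)!$ for $s\in[0,\delta]$. On the other hand, $f\in C_M(\IR)$ only forces the weaker bound $|g^{(k)}(s)|\lesssim (R/\sqrt{s})^{k}M_k$ for $s>0$, blowing up as $s\to 0$. The hypothesis $\lim_k(N_k/M_{2k})^{1/k}=0$ is precisely what creates enough room between these two scales to let $g$ realise the $C_M$-scale for $f$ while violating the $N$-scale constraint for $F$: it guarantees that for infinitely many $k$ one can find $s$ arbitrarily close to $0$ where $(R/\sqrt{s})^{k}M_k$ exceeds any multiple of $R'^{2k}N_{2k}\,k!/(2k)!$.

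The realisation step is where quasianalyticity makes things delicate, since Borel's theorem is unavailable. I would construct $f$ via an explicit Bang-type lacunary cosine series $f(t)=\sum_{n}c_n\cos(\lambda_n t)$ (automatically even, hence of the form $g(t^2)$), with amplitudes $c_n$ and frequencies $\lambda_n$ tuned using the hypothesis on $N$ so that (i) every derivative $f^{(j)}$ satisfies $|f^{(j)}(t)|\le CR^jM_j$ on $\IR$, and (ii) along a carefully chosen sequence $(k_\ell,s_\ell)$ with $s_\ell\to 0$, the values $|g^{(k_\ell)}(s_\ell)|$ exceed $CR^{2k_\ell}N_{2k_\ell}\,k_\ell!/(2k_\ell)!$ by unboundedly large factors, contradicting $F\in C_N$. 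Setting $g(s):=f(\sqrt{s})$ and $F(x,y):=g(x^2+y^2)$ produces the required example.

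For the last assertion take $C_N=C_M$: log-convexity of $M$ gives $M_k^2\le M_{2k}$, hence $(M_k/M_{2k})^{1/k}\le M_k^{-1/k}$; the assumption that $C_M$ properly contains the analytic functions forces $M_k^{1/k}\to\infty$, so $\lim(M_k/M_{2k})^{1/k}=0$ is automatic and the first part of the theorem applies. The main obstacle is the explicit Bang-type construction of $f$: one must control \emph{all} derivatives of $f$ in the $C_M$-scale (globally on $\IR$) while simultaneously forcing $g$ to retain extremal derivative growth at a sequence of points approaching $s=0$, and it is in balancing these two competing requirements --- using the quantitative hypothesis $\lim_k(N_k/M_{2k})^{1/k}=0$ to certify that the lacunary parameters $(c_n,\lambda_n)$ can be chosen consistently --- that the real work lies.
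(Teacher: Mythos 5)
There is a genuine gap, and it is fatal to the radial ansatz. If $F(x,y)=g(x^2+y^2)$ with $g\in C_{M^{(2)}}$, the largest derivatives of $F$ that the construction can produce come from $\partial_y^{2k}F(x,0)=\frac{(2k)!}{k!}g^{(k)}(x^2)$; even when $g$ realizes the extremal $C_{M^{(2)}}$ scale $|g^{(k)}|\sim k!M_{2k}$, this gives $|\partial_y^{2k}F|\sim (2k)!\,M_{2k}$. To contradict $F\in C_N$ one must beat the bound $A B^{2k}(2k)!\,N_{2k}$, which forces $(N_{2k}/M_{2k})^{1/k}\to 0$. But the theorem's hypothesis is only $(N_k/M_{2k})^{1/k}\to 0$, which is strictly weaker: since $M$ is log-convex, $N_{2k}/M_{2k}$ can be bounded away from $0$ (indeed $\to\infty$) while $N_k/M_{2k}\to 0$ superexponentially. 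Most tellingly, the ``in particular'' assertion takes $N=M$, where $N_{2k}/M_{2k}\equiv 1$; so the radial $F$ can never land outside $C_M$. In fact one can check directly (e.g.\ via the Taylor coefficients $\partial_x^{2a}\partial_y^{2b}F(0,0)=(2a)!(2b)!\,g^{(a+b)}(0)/(a!b!)$ together with $g^{(j)}(0)=\frac{j!}{(2j)!}f^{(2j)}(0)$ and $\binom{a+b}{a}\le 2^{a+b}$) that if $f(t)=g(t^2)\in C_M$ is even, then $F=g(x^2+y^2)$ already satisfies the $C_M$ estimates; the radial ansatz therefore collapses precisely the loss of regularity the theorem is supposed to exhibit.

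This is not an accident: the paper's introduction explicitly singles out the form $h_1\circ h_2$ with $h_1$ a one-variable function and $h_2$ real-analytic (which is exactly your $g\circ(x^2+y^2)$) as what all prior constructions look like and what cannot work here. The paper's construction (Lemma~\ref{cl:Main}) achieves the strictly stronger lower bound $|\partial_{x_2}^{\lambda}F(x_\lambda)|\ge \epsilon^\lambda\,\lambda!\,M_{2\lambda}$ --- a $\lambda$-th (not $2\lambda$-th) derivative of size $M_{2\lambda}$ --- which no composition of a one-variable function with an analytic map can produce. The essential two-variable ingredient is the anisotropic brick $u_{q,m,\rho}(x)=\rho^2/(\rho^2+(x_1-\rho q)^2+(mx_2)^2)$: the steepness parameter $m$ makes the $x_2$-derivatives huge in Cartesian coordinates, while the blowing-up $\sigma$ absorbs exactly this factor (Lemma~\ref{lem: base function der in polar coor}), so the composite stays in $C_M$. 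Combined with the rescaling $\rho_\lambda=M_\lambda/M_{\lambda+1}$ and the Ostrowski weighting, the $\lambda$-th $x_2$-derivative picks up a second factor $M_\lambda$, giving $M_\lambda^2\sim M_{2\lambda}$ along a suitable set of indices $\Lambda'$ (which is where quasianalyticity enters, via Lemma~\ref{lem: extrareg2}). None of this mechanism is visible to a radial function, so the proposal cannot be repaired without abandoning the ansatz.
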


{Note that, if $F\circ \sigma\in C_M$, then $F$ restricts to a function of class $C_M$ on any 
nonsingular $C_M$ curve (since $\s$ restricts to an isomorphism from a lifting of any such curve);}
in particular, $F$ is formally {of class} $C_M$ at every point. Therefore the loss of regularity (i.e., 
the fact that $F$ belongs to the Denjoy-Carleman class $C_{M^2}$ or $C_{M^{(2)}}$ instead of $C_{M}$,
{according to} Theorems \ref{thm:Main} or \ref{thm:Main2}) is {intrinsically a phenomenon} of two or more variables---it cannot be observed by sampling the function on a nonsingular curve. {An
earlier example of a function of class $C^\infty$ (as opposed to {Denjoy-Carleman})
of more then one variable, which is not in $C_M$, although its restriction to any nonsingular $C_M$ curve is of class $C_M$,} was constructed by Jaffe \cite{Jaffe}. 
{In contrast, all explicit constructions of $C_M$ functions of which} we are aware (e.g., \cite{ABBNZ,Beurling,Borel,Carl,Jaffe,Mandel,NSV,Nelim,Ostrowski}; cf. the historical survey \cite{Bilodeau}), are either one variable constructions or of the form  $h_1\circ h_2$, where $h_1$ is a one variable $C_M$ function and $h_2$ is real-analytic.

The hypothesis that $C_{M^{(2)}} = C_{M^2}$ {in Theorem \ref{thm:Main} is mild in the sense that standard} examples of quasianalytic Denjoy-Carleman classes usually satisfy this hypothesis (e.g., $M_k=\log(k)^k$, $M_k= \log(\log(k))^k$, etc.); {moreover,} in quasianlaytic classes, the condition is verified for ``almost every" $k \in \mathbb{N}$ (see Lemma \ref{lem: extrareg2} for a precise result). It is nevertheless possible to construct examples of quasianalytic classes where the hypothesis is not satisfied.

\begin{proposition}[Classes such that $C_{M^2} \subsetneq C_{M^{(2)}}$]\label{prop:Difference}
There is a Denjoy-Carleman class $C_M$ closed under differentiation, such that $C_{M^{(2)}}$ is quasianalytic and $C_{M^2} \subsetneq C_{M^{(2)}}$.
\end{proposition}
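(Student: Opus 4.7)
The plan is to construct $M$ by specifying its logarithmic differences $\omega_k := \log(M_k/M_{k-1})$ as a non-decreasing step function on blocks of rapidly increasing length. Set $n_0 := 2$, recursively define $n_j := \lceil n_{j-1}\exp(n_{j-1}^2)\rceil$, let $a_j := \log n_{j-1}$, and set $\omega_k := a_j$ for $k \in I_j := (n_{j-1}, n_j]$ (with $\omega_k := 0$ for $k \le n_0$). Finally, let $M_k := \exp\bigl(\sum_{i=1}^k \omega_i\bigr)$ and $M_0 := 1$. Then $M$ is log-convex (since $\omega$ is non-decreasing), and for $k \in I_j$ one has $\omega_k = \log n_{j-1} < \log k$, so $\omega_k = O(\log k)$, which ensures $C_M$ is closed under differentiation.

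To establish quasianalyticity of $C_{M^{(2)}}$, apply the Denjoy--Carleman theorem to the log-convex sequence $(M_{2k})_{k\in\IN}$. In either of the standard normalizations (which may differ by a factor of $1/k$ in each summand), the summand for indices $k$ with $\{2k-1, 2k\}\subseteq I_j$ has the form $1/(k^c n_{j-1}^2)$, $c\in\{0,1\}$. Summing over the at least $|I_j|/2 - 1$ such $k$ in block $j$ yields a contribution of order at least $1$: for $c=1$, $\sum 1/k \sim \log(n_j/n_{j-1}) = n_{j-1}^2$, so the block contributes $n_{j-1}^2/n_{j-1}^2 = 1$; for $c=0$, the block contributes at least $(|I_j|/2-1)/n_{j-1}^2 \to \infty$. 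In either case the series diverges.

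For the strict inclusion $C_{M^2}\subsetneq C_{M^{(2)}}$ it suffices to prove $(M_{2k}/M_k^2)^{1/k}$ is unbounded. Since $|I_{j+1}|\ge n_j(\exp(n_j^2)-1)\gg n_j$, the interval $(n_j, 2n_j]$ lies in $I_{j+1}$, so $\log M_{2n_j} - \log M_{n_j} = n_j a_{j+1}$. Writing $T_i := |I_i|a_i$, the sum $\log M_{n_j} = \sum_{i\le j}T_i$ is dominated by $T_j$ because $T_{i-1}/T_i \le n_{i-1}/|I_i| \sim e^{-n_{i-1}^2}$, so $\log M_{n_j}/n_j = T_j/n_j + o(1) = a_j + o(1)$. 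Therefore
\[
n_j^{-1}\log(M_{2n_j}/M_{n_j}^2) = a_{j+1} - a_j + o(1) = \log(n_j/n_{j-1}) + o(1) = n_{j-1}^2 + o(1) \to \infty.
\]

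The main difficulty is a tradeoff between two opposing constraints: making $(M_{2k}/M_k^2)^{1/k}$ unbounded requires the inter-block gaps $a_{j+1} - a_j$ in $\omega$ to be unbounded, which produces exponential suppression $e^{-2a_j}$ in the Denjoy--Carleman series for $C_{M^{(2)}}$; preserving quasianalyticity then forces the block lengths $|I_j|$ to be at least of order $e^{2a_j}$ (times a polynomial factor in $a_j$) to compensate. The recursive choice $a_j = \log n_{j-1}$ with $|I_j|\sim n_{j-1}\exp(n_{j-1}^2)$ saturates this balance: each block contributes a positive quantity bounded below to the divergent quasianalyticity series, while $a_{j+1} - a_j = n_{j-1}^2$ grows without bound.
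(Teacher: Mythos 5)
Your construction is correct and produces a valid example. Let me verify the key points: log-convexity follows from $\omega$ non-decreasing; diff-closure from $\omega_{k+1}<\log(k+1)$ (so $(M_{k+1}/M_k)^{1/k}=e^{\omega_{k+1}/k}$ is bounded); quasianalyticity of $C_{M^{(2)}}$ from the divergence you argue via $\sum_{k}\frac{M_{2k}}{(k+1)M_{2k+2}}$, where block $j$ contributes $\gtrsim\log(n_j/n_{j-1})\cdot n_{j-1}^{-2}\ge 1$ since $\log(n_j/n_{j-1})\ge n_{j-1}^2$; and the strict inclusion from $n_j^{-1}\log(M_{2n_j}/M_{n_j}^2)=a_{j+1}-a_j+o(1)\to\infty$ (using that $(n_j,2n_j]\subset I_{j+1}$, and the geometric domination of the tail $\sum_{i<j}|I_i|a_i$ by $|I_j|a_j$). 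Combined with the automatic inclusion $C_{M^2}\subseteq C_{M^{(2)}}$ (from $M_k^2\le M_{2k}$, by log-convexity with $M_0=1$), the criterion of \S\ref{ssec:Comparison}(b) gives $C_{M^2}\subsetneq C_{M^{(2)}}$.

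Your route is the same basic idea as the paper's (a step-function construction on exponentially long blocks, balancing quasianalyticity against unboundedness of $M_{2k}/M_k^2$), but it differs in two respects worth noting. First, you parameterize by the increments $\omega_k=\log(M_k/M_{k-1})$ and make these piecewise constant, whereas the paper specifies $L_k:=M_k^{1/k}$ piecewise constant (so its $\omega_k$ has an extra spike at each block boundary). Second, your mechanism for the strict inclusion is different: the paper imposes a combinatorial alignment condition $|\Lambda\cap 2\Lambda|=\infty$ together with $v_{2k}/v_k\to\infty$, while you avoid any such alignment by making the block lengths so large that $(n_j,2n_j]$ stays inside the single block $I_{j+1}$, and then using $a_{j+1}-a_j\to\infty$. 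Your version is fully explicit via the recursion $n_j=\lceil n_{j-1}e^{n_{j-1}^2}\rceil$, whereas the paper gives abstract conditions on $(v_k)$ and $\Lambda$ plus one example; both are valid, and your block-length condition is somewhat cleaner than the $\Lambda\cap 2\Lambda$ condition. One minor imprecision: the loose remark about a ``$c=0$ normalization'' is unnecessary; the paper's criterion \eqref{eq:Quasianalytic} already has the $1/(k+1)$ factor, and your $c=1$ computation handles it completely.
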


The paper is organized as follows. In Section \ref{sec:DC}, we provide all necessary background on quasianalytic Denjoy-Carleman classes $C_M$, including new results about shifted classes (Lemmas \ref{lem:CharacterizationCM2} and \ref{lem: extrareg2}). In Section \ref{sec:Construction}, we explicitly construct the main examples and we prove Theorems \ref{thm:Main} and \ref{thm:Main2}. The construction relies on two preliminary steps (see \S\S\ref{ssec:Brick}, \ref{ssec:Build}), where we provide tight estimates for the ``bricks" and ``building blocks" of the main construction. The main example is constructed in \S\ref{ssec:Flat}, and Theorems \ref{thm:Main} and \ref{thm:Main2} are proved in \S\ref{ssec:proofMain}. Finally, we prove Proposition \ref{prop:Difference} in Section \ref{sec:ProofProp}.

\subsection{Algebraic properties of quasianalytic classes}
A Denjoy-Carleman class $C_M$ which is closed {under} differentiation and quasianalytic admits resolution of singularities, according to \cite{BMinv,BMselecta}. Resolution of singularities is a powerful technique which, roughly {speaking}, associates to a function $f \in C_M(U)$ a finite composite of blowings-up $\sigma : V \to U $ such that $f\circ \sigma$ is everywhere locally a monomial times a unit. This technique was explored in the last twenty years in order to study geometric and algebraic properties of quasianalytic classes {\cite{BMselecta, RSW, Nelim, RS, BBB, BBC, BBmon}}. 

{Solutions of problems on Denjoy-Carleman classes $C_M$ using resolution of singularities, in general lead to \emph{loss of regularity}, in the sense that  a problem involving data of class $C_M$ has
solutions in a shifted class $C_{M^{(p)}}$, for certain $p \in \mathbb{N}$ \cite{BBB,BBC}. Theorem \ref{thm:Main} shows that loss of regularity is an essentially unavoidable limitation of the technique of resolution of singularities. It seems important, therefore, to understand whether loss of regularity is a limitation only of the technique, or is intrinsic to geometric questions on Denjoy-Carleman classes.}

For example, resolution of singularities is used in \cite[Proposition 4.9]{BBB} to show that that principal ideals in a local ring of functions in a quasianalytic Denjoy-Carleman class $C_M$ are \emph{closed}, modulo loss of regularity;
i.e., given $f,\,g \in C_M(U)$, where $U$ is a neighbourhood of $0\in\IR^n$, if $f$ formally divides $g$ at $0$, 
then there is a function $h \in C_{M^{(p)}}(V)$ in a neighbourhood $V$ of $0$, such that $g =f \cdot h$, 
where $p \in \mathbb{N}$ depends only on $f$. If loss of regularity were necessary in this problem, this would 
imply that local rings in a quasianalytic class $C_M$ are not, in general, Noetherian. The question of
Noetherianity in quasianaltyic geometry goes back to \cite{Child} (see also \cite{Thilliez}).


\section{Preliminaries}\label{sec:DC}
\subsection{Denjoy-Carleman classes}\label{subsec:DC}
We use standard multiindex notation: Let $\IN$ denote the nonnegative integers. If $\al = (\al_1,\ldots,\al_n) \in
\IN^n$, we write $|\al| := \al_1 +\cdots +\al_n$, $\al! := \al_1!\cdots\al_n!$, $x^\al := x_1^{\al_1}\cdots x_n^{\al_n}$,
and $\p^{|\al|} / \p x^{\al} := \p^{\al_1 +\cdots +\al_n} / \p x_1^{\al_1}\cdots \p x_n^{\al_n}$. {We also write
$\p^\al := \p^{|\al|} / \p x^{\al}$.} 

\begin{definition}[Denjoy-Carleman classes]\label{def:DC}
Let $M = (M_k)_{k\in \IN}$ denote a sequence of positive real numbers which is \emph{logarithmically
convex}; i.e., the sequence $(M_{k+1} / M_k)$ is nondecreasing.
A \emph{Denjoy-Carleman
class} $C = C_M$ is a class of $\cC^\infty$ functions determined by the following condition: A function 
$f \in \cC^\infty(U)$ (where $U$ is open in $\IR^n$) is of class $C_M$ if, for every compact subset $K$ of $U$,
there exist constants $A,\,B > 0$ such that
\begin{equation}\label{eq:DC}
\left|\frac{\p^{|\al|}f}{\p x^{\al}}\right| \leq A B^{|\al|} \al! M_{|\al|}
\end{equation}
on $K$, for every $\al \in \IN^n$.
\end{definition}

The logarithmic convexity assumption implies that $M_jM_k \leq M_0M_{j+k}$, for all $j,k$, and that the sequence $((M_k/M_0)^{1/k})$ is nondecreasing. The first of these conditions guarantees that $C_M(U)$ is a ring, and  the second that $C_M(U)$ contains the ring $\cO(U)$ of real-analytic functions on $U$, for every open $U\subset \IR^n$. (If $M_k=1$, for all $k$, then $C_M = \cO$.). A  Denjoy-Carleman class $C_M$ is closed under composition, {by Roumieu \cite{Rou}}.

{We will always assume that $M$ satisfies the additional assumption,}
\begin{equation}\label{eq:ClosedByDiff}
\displaystyle{\sup \left(\frac{M_{k+1}}{M_k}\right)^{1/k} < \infty};
\end{equation}
{the latter implies} that $C_M$ is closed under differentiation. The converse of this statement is due to S. Mandelbrojt \cite{Mandel}. In a Denjoy-Carleman class $C_M$, closure under differentiation is equivalent to closure under division by a coordinate; i.e., if $f \in C_M(U)$ and
$$
f(x_1,\dots, x_{i-1}, a, x_{i+1},\ldots, x_n) = 0,
$$
where $a \in \IR$,  then $f(x) = (x_i - a)h(x),$ where $h \in C_M(U)$. 

Finally, closure under differentiation implies closure under inverse {(Komatsu \cite{Kom}; see \cite{BMselecta} for a simple proof)}. More precisely, let $\varphi : U \to V$ denote a $C_M$-mapping between open subsets $U$, $V$ of $\IR^n$. Let $a \in  U$ and suppose that the Jacobian matrix
$(\partial \varphi/\partial x)(a)$ is invertible. Then there are neighbourhoods $U'$ of $a$ and $V'$ of $b := \varphi(a)$, and a $C_M$-mapping  $\psi: V' \to U'$ such that
$\psi(b) = a$ and $\psi\circ \varphi$  is the identity mapping of
$U '$.

\subsection{Comparison between Denjoy-Carleman classes.}\label{ssec:Comparison}
The following {criteria are due} to Cartan and Mandelbrojt (see \cite[Thm.\,XI]{Mandel}). 
If $C_M$, $C_N$ are Denjoy-Carleman classes closed under differentiation, then: 

\begin{itemize}
\item[(a)] $C_N(U) \subseteq C_M(U)$, for all $U$, if and only if 
\[
\sup_{k \in \mathbb{N}} \left(N_k /M_k\right)^{1/k} < \infty;
\]
\item[(b)] $C_N(U) \subsetneq C_M(U)$, for all $U$,
if and only if 
\[
\sup_{k \in \mathbb{N}} \left(N_k /M_k\right)^{1/k} < \infty \quad \text{ and }\quad \inf_{k \in \mathbb{N}} \left(N_k /M_k\right)^{1/k} =0.
\]
\end{itemize}
We note that for any  Denjoy-Carleman class $C_M$, there is a function in $C_M((0,1))$ which is nowhere in any smaller class {(by Jaffe \cite[Thm.\,1.1]{Jaffe}; E. Borel constructed a function in $C_M((0,1))$
that is nowhere analytic \cite[Chapt.\,2]{BorelPhD})}.

\subsection{Shifted Denjoy-Carleman classes}\label{subsec:shift}
Given $M = (M_j)_{j\in \IN}$ and a positive integer $p$, let $M^{(p)}$ denote the sequence $M^{(p)}_j := M_{pj}$. If $M$ is logarithmically convex, then $M^{(p)}$ is logarithmically convex:
$$
\frac{M_{kp}}{M_{(k-1)p}} = \frac{M_{kp}}{M_{kp-1}} \cdots \frac{M_{kp -p+1}}{M_{kp - p}} \leq
\frac{M_{kp+p}}{M_{kp+p-1}} \cdots \frac{M_{kp+1}}{M_{kp}} = \frac{M_{(k+1)p}}{M_{kp}}.
$$
Therefore, if $C_M$ is a Denjoy-Carleman class, then so is $C_{M^{(p)}}$. Clearly,
$C_M \subseteqq C_{M^{(p)}}$. Moreover, if $C_M$ satisfies assumption \eqref{eq:ClosedByDiff}, then the same is true for $C_{M^{(p)}}$. We recall that $C_{M^{(2)}}$ is the smallest Denjoy-Carleman class containing all $g \in \cC^\infty(\IR)$ such that $g(t^2)\in C_M(\IR)$ \cite[Rmk.\,6.2]{Nelim}.

In {Lemma \ref{lem:CharacterizationCM2} following, we characterize} sequences $M$ such that the shifted class $C_{M^{(2)}}$ {equals} $C_{M^2}$, where $M^2 := (M_k^2)_{k \in \mathbb{N}}$. {By log-convexity,} $C_{M^2} \subset C_{M^{(2)}}$. In order to obtain equality, we need the additional condition,
\[
\inf_{k  \in \mathbb{N}} \left\{  \left( \frac{M_{k}^2}{M_{2k}} \right)^{1/k} \right\} > 0\,.
\]
The latter is a regularity condition that complements log-convexity. More precisely, by log-convexity,
\[ 
\frac{M_k^2}{M_{2k}}=M_k\cdot\frac{M_k}{M_{k+1}}\cdots\frac{M_{2k-1}}{M_{2k}}\le M_k\left(\frac{M_k}{M_{k+1}}\right)^k,
\] 
so that
\begin{equation}\label{eq:ComparisonConditions}
\left(\frac{M_k^2}{M_{2k}}\right)^{1/k}\leq\, \frac{M_k^{1+1/k}}{M_{k+1}}\,, \quad \text{ for all }\, k\in \mathbb{N}.
\end{equation}
{The right hand side of \eqref{eq:ComparisonConditions} can be used to characterize} sequences $M$ such that $C_{M^2} \subset C_{M^{(2)}}$:

\begin{lemma}\label{lem:CharacterizationCM2}
Let $M$ be a log convex sequence with $M_0=1$. Then $C_{M^2}= C_{M^{(2)}}$ if and only if
\[
 \inf_{k \in \mathbb{N}} \left\{ \frac{M_{k}^{{1+ 1/k}}}{M_{k+1}} \right\}> 0 
\]
\end{lemma}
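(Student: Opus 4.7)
The plan is to recast the equality of classes as a comparison of sequences, via the Cartan–Mandelbrojt criterion recalled in \S\ref{ssec:Comparison}. Since $M_k^2 \le M_{2k}$ by log convexity (with $M_0=1$), we always have $C_{M^2}\subseteq C_{M^{(2)}}$, so the equality $C_{M^2}=C_{M^{(2)}}$ is equivalent to $C_{M^{(2)}}\subseteq C_{M^2}$, i.e.\ to
\[
\inf_{k\in\IN}\left(\frac{M_k^2}{M_{2k}}\right)^{1/k}>0.
\]
One direction of the lemma is then immediate from the displayed inequality \eqref{eq:ComparisonConditions}: if the above infimum is positive, then $M_k^{1+1/k}/M_{k+1}$ is bounded below. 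So the content of the lemma is the \emph{converse} implication, and I would prove it directly in this reformulated setting.

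To set up, take logarithms. Writing $\rho_j := \log(M_{j+1}/M_j)$, log convexity says $(\rho_j)$ is nondecreasing, and $P_k := \sum_{j=0}^{k-1}\rho_j = \log M_k$ since $M_0=1$. Put
\[
S_k := k\rho_k - P_k \geq 0,\qquad T_k := P_{2k}-2P_k = \sum_{j=0}^{k-1}(\rho_{k+j}-\rho_j)\geq 0.
\]
Then $-\log\bigl(M_k^{1+1/k}/M_{k+1}\bigr)=S_k/k$ and $-\log\bigl((M_k^2/M_{2k})^{1/k}\bigr)=T_k/k$. The task is therefore to show that the boundedness of $S_k/k$ implies the boundedness of $T_k/k$.

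The main (and essentially only) step is a doubling trick. Assume $S_k/k\le C$ for every $k$; equivalently $\rho_k \le P_k/k + C$. Applying this at $2k$ gives $\rho_{2k}\le P_{2k}/(2k) + C$. By monotonicity of $(\rho_j)$, each $\rho_{k+j}\le \rho_{2k}$ for $0\le j\le k-1$, and hence
\[
T_k \;\le\; k\rho_{2k} - P_k \;\le\; \frac{P_{2k}}{2} - P_k + kC.
\]
The crucial observation is that, by definition of $T_k$, one has $P_{2k}/2 - P_k = T_k/2$. Substituting yields $T_k\le T_k/2 + kC$, hence $T_k\le 2kC$, and therefore $(M_k^2/M_{2k})^{1/k}\ge e^{-2C}>0$, as required.

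The only real obstacle is finding the doubling identity $P_{2k}/2 - P_k = T_k/2$, which allows one to absorb a $T_k/2$ term on the right-hand side; everything else is either Cartan–Mandelbrojt, monotonicity of $(\rho_j)$, or the already-displayed inequality \eqref{eq:ComparisonConditions}. No additional hypothesis on $M$ (beyond log convexity) seems needed.
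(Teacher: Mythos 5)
Your proof is correct, and the converse direction takes a genuinely different route from the paper's. Both arguments reduce the equality of classes to $\inf_k (M_k^2/M_{2k})^{1/k}>0$ via the criterion in \S\ref{ssec:Comparison}, and both dispose of the ``only if'' direction using \eqref{eq:ComparisonConditions}. For the converse, the paper iterates the reformulated hypothesis $M_k \geq a^{k/(k+1)}M_{k+1}^{k/(k+1)}$ through the $k$ consecutive indices from $k$ to $2k$, telescopes to $M_k \geq M_{2k}^{1/2}\prod_{j=1}^k a^{k/(k+j)}$, and then estimates the sum $\sum_{j=1}^k 1/(k+j)$. You instead pass to logarithms, apply the hypothesis only once---at the single index $2k$---combine it with the monotonicity of the increments $\rho_j$ to obtain $T_k\le k\rho_{2k}-P_k\le P_{2k}/2-P_k+kC$, and then absorb via the identity $P_{2k}/2-P_k=T_k/2$: the self-bounding inequality $T_k\le T_k/2+kC$ forces $T_k\le 2kC$. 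This replaces the paper's $k$-fold iteration and harmonic-sum estimate with a single application of the hypothesis plus one algebraic identity, yielding the explicit bound $(M_k^2/M_{2k})^{1/k}\ge e^{-2C}$ directly. Both arguments use only log convexity and $M_0=1$; yours is the shorter path.
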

\begin{proof}
{The ``{only if}'' direction} is immediate from inequality \eqref{eq:ComparisonConditions}. To prove the converse, note that there exists a constant $a>0$ such that
\[
M_k \geq a^{k/{ (k+1)}} \cdot M_{k+1}^{k/{(k+1)}}, \quad \text{ for all }\, k \in \mathbb{N}.
\]
{By applying this inequality $k$ times, it therefore follows that} 
\[
M_k \geq \prod_{j=1}^k a^{k/{(k+j)}} \cdot M_{2k}^{1/2}, \quad \text{ for all }\, k \in \mathbb{N},
\]
and, {using Jensen's} inequality, we get
\[
 \left( \frac{M_{k}^2}{M_{2k}} \right)^{1/k} \geq\, \prod_{j=1}^k a^{2/{(k+j)}}\, \geq\, \exp\left( \log(a^2) \sum_{j=1}^k \frac{1}{k+j} \right) \geq a^8,
\]
proving the converse.
\end{proof}

\subsection{Quasianalytic Denjoy-Carleman classes.} We say that a Denjoy-Carleman class $C_M$ is {\emph{quasianalytic} if is satisfies the following condition: if} $f \in C_M(U)$ has Taylor expansion zero at $a \in U$, then $f$ is identically zero near $a$. According to the Denjoy-Carleman theorem \cite[Thm.\,1.3.8]{Horm}, the class $C_M$ is quasianalytic if and only if
\begin{equation}\label{eq:Quasianalytic}
\displaystyle{\sum_{k=0}^\infty\frac{M_k}{(k+1)M_{k+1}} = \infty} \quad \text{ or, equivalently, } \quad \displaystyle{\sum_{k=0}^\infty\frac{1}{(k+1)M_{k}^{1/k}} = \infty}.
\end{equation}
{(Equivalence of the latter two criteria follows from log-convexity} of $M$.) In the following, a quasianalytic Denjoy-Carleman class $C_M$ is always assumed to be closed under differentiation. Quasianalytic Denjoy-Carleman classes $C_M$ admit resolution of singularities {by sequences of blowings-up} \cite{BMinv,BMselecta}.

For a quasianalytic class $C_M$, the shifted sequence $M^{(2)}$ and the squared sequence $M^2$ are ``almost everywhere" comparable. In order to make this statement precise, we {first recall} the notion of density.

\begin{definition}[Density of {indices}]
Consider a set of positive integers $\Lambda$. For every $n\in \mathbb{N}$, let $A_\Lambda(n)$ denote the cardinality of the set $\{k\in \Lambda;\, k\leq n\}$. {The \emph{density} $\delta(\Lambda)$} of the set $\Lambda$ is defined as
\[
\delta(\Lambda):= \lim_{n\to \infty} \frac{A_\Lambda(n)}{n}.
\]
\end{definition}
\begin{remark}\label{rmk: harmonic sum postive density}
By the Abel summation formula \cite[Thm.\, 4.2]{Apostol},
\[ 
\sum_{k\in \Lambda,\; k\le n}\frac{1}{k}=\int_1^n \frac{A_\Lambda(x)}{x^2}dx+\frac{A_\Lambda(n)}{n}. \]
In particular, if  $\delta(\Lambda)>0$ then, for every sufficiently large $n$,
\[ 
\sum_{k\in \Lambda,\; k\le n}\frac{1}{k}>\frac{\delta(\Lambda)}{2}\log n. 
\]
\end{remark}

\begin{lemma}[Comparison between $C_{M^2}$ and $C_{M^{(2)}}$]\label{lem: extrareg2}
Let $M$ be a log convex sequence with $M_0=1$, such that $C_M$ is quasianalytic. Then,
for every $\varepsilon \in (0,1)$,
	\[ 
\delta(\Lambda_{\varepsilon})=0, \quad \text{ where } \quad	\Lambda_{\varepsilon}:=
	\left\{k:\in\mathbb{N}:\left(\frac{M_{k}^2}{M_{2k}}\right)^{{1/2k}}<1-\varepsilon\right\}.
	\]
\end{lemma}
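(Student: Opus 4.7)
The plan is to argue by contradiction using the quasianalyticity criterion \eqref{eq:Quasianalytic} in the form $\sum_k 1/((k+1) L_k) = \infty$, where I set $L_k := M_k^{1/k}$. By the log-convexity recalled in Subsection \ref{subsec:DC}, the sequence $(L_k)$ is nondecreasing, and since $(M_k^2/M_{2k})^{1/2k} = L_k/L_{2k}$, the condition defining $\Lambda_\varepsilon$ is equivalent to $L_{2k}/L_k > 1/(1-\varepsilon)$. Assuming for contradiction that $\delta(\Lambda_\varepsilon) =: \delta > 0$, my goal is to produce a polynomial lower bound $L_N \geq C'\! N^{\alpha}$ with $\alpha > 0$, which will force the above series to converge and contradict the hypothesis.

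The key input from positive density is that $A_{\Lambda_\varepsilon}(2N) - A_{\Lambda_\varepsilon}(N) \sim \delta N$, so there exists $N_0$ such that $\Lambda_\varepsilon \cap [N, 2N]$ is nonempty for every $N \geq N_0$. Starting from any $k_1 \in \Lambda_\varepsilon$ with $k_1 \geq N_0$, I build a chain $k_1 < k_2 < \cdots$ in $\Lambda_\varepsilon$ by recursively selecting $k_{i+1} \in \Lambda_\varepsilon \cap [2k_i, 4k_i]$, which is nonempty by the previous observation applied with $N = 2k_i \geq N_0$. By construction, $k_i \leq 4^{i-1} k_1$. Monotonicity of $L$ combined with $k_i \in \Lambda_\varepsilon$ yields
\[
L_{k_{i+1}} \geq L_{2k_i} \geq \frac{L_{k_i}}{1-\varepsilon},
\]
and iterating gives $L_{k_i} \geq L_{k_1}(1-\varepsilon)^{-(i-1)}$. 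Combining this with $i-1 \geq \log_4(k_i/k_1)$ produces
\[
L_{k_i} \geq L_{k_1}\left(\frac{k_i}{k_1}\right)^{\alpha}, \qquad \alpha := \log_4 \frac{1}{1-\varepsilon} > 0.
\]
For any $N$ with $N \geq 4 k_1$, choose the largest $k_i$ in the chain satisfying $k_i \leq N$; then $k_{i+1} > N$, hence $k_i > N/4$, and the monotonicity of $L$ gives $L_N \geq L_{k_i} \geq C' N^{\alpha}$ for all $N$ sufficiently large.

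This polynomial lower bound implies $\sum_k 1/((k+1) L_k) \leq C'' \sum_k k^{-(1+\alpha)} < \infty$, contradicting \eqref{eq:Quasianalytic}. The heart of the argument is the chain construction: positive density is precisely what guarantees that each successive link $k_{i+1}$ can be found within a bounded multiplicative range of $k_i$, and each use of the defining inequality for $\Lambda_\varepsilon$ multiplies $L$ by a constant factor $> 1$ while the index only multiplies by a bounded factor, which converts the pointwise gap encoded in $\Lambda_\varepsilon$ into genuine polynomial growth. I expect the only delicate point to check carefully is the eventual nonemptiness of $\Lambda_\varepsilon \cap [N,2N]$; this follows essentially by writing $A_{\Lambda_\varepsilon}(2N) - A_{\Lambda_\varepsilon}(N) = 2N\cdot\bigl(A_{\Lambda_\varepsilon}(2N)/(2N)\bigr) - N\cdot\bigl(A_{\Lambda_\varepsilon}(N)/N\bigr)$ and letting $N\to\infty$, so no serious obstacle arises.
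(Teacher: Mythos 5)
Your proof is correct, but it takes a genuinely different route from the paper's. Both arguments are proofs by contradiction built on the same telescoping inequality: with $L_k := M_k^{1/k}$ nondecreasing, membership of $k$ in $\Lambda_\varepsilon$ gives $L_{2k}/L_k > (1-\varepsilon)^{-1}$, and positive density lets one accumulate many such jumps. The paper implements this by extracting from $\Lambda_\varepsilon$ a sparse subsequence $(a_k)$ with $a_{k+1}\geq 2a_k$ and at most one term per dyadic interval $[2^{\ell-1},2^\ell)$, controls the number of such terms below $2^n$ via density, deduces a geometric decay $1/L_{2^n}\lesssim (1-\varepsilon)^{cn}$, and then estimates $\sum 1/(nL_n)$ by grouping into dyadic blocks. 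You instead grow a geometric chain $k_1<k_2<\cdots$ in $\Lambda_\varepsilon$ with $k_{i+1}\in\Lambda_\varepsilon\cap[2k_i,4k_i]$, which positive density furnishes directly, convert the accumulated jumps plus the bound $k_i\leq 4^{i-1}k_1$ into the pointwise polynomial lower bound $L_N\gtrsim N^\alpha$ with $\alpha=\log_4(1/(1-\varepsilon))>0$, and conclude by a one-line comparison with $\sum k^{-(1+\alpha)}$. Your route avoids the paper's more delicate counting of points per dyadic interval and gives the sharper intermediate statement (polynomial growth of $L$) at essentially no extra cost. One small shared caveat: the paper defines $\delta(\Lambda)$ as a limit, and both your argument and the paper's implicitly treat the negation as ``the limit exists and is positive''; if one wished to prove the stronger statement that the \emph{upper} density vanishes, both proofs would need a minor modification (your nonemptiness of $\Lambda_\varepsilon\cap(N,2N]$ for all large $N$, which is exactly where the full limit is used, is the point to adjust). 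Given the paper's own definition and conventions, this is not a gap in your proposal.
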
	
\begin{proof}
 Assume, for the sake of contradiction, that $\Lambda_\varepsilon$ does not have zero density. Thus there exists a subset $\widetilde{\Lambda}_\varepsilon\subseteq \Lambda_\varepsilon$ of positive density. Now, there exists an increasing sequence  $(\widetilde{a}_k)_{k \in \mathbb{N}} \subset \widetilde{\Lambda}_\varepsilon$ such that the sets $\{k\in \mathbb{N}; \, \widetilde{a}_{k+1}\leq 2^{n}\}$ have cardinality at least ${\delta(\widetilde{\Lambda}_\varepsilon)}2^{n-1} $, for all $n$ sufficiently large. 

This means that, for $n$ large enough, at most $1/\delta(\widetilde{\Lambda}_\varepsilon)$ subsequent intervals
$[2^{\ell-1},2^\ell)$, $l = n+1, n+2,\ldots$, do not contain points of $(\widetilde{a}_k)$. We can, therefore, choose a
subsequence $(a_k)_{k \in \mathbb{N}} \subset(\widetilde{a}_k)_{k \in \mathbb{N}}$ with the following properties:
$a_{k+1}\geq 2a_{k}$, each interval $[2^{\ell-1},2^\ell)$ contains at most one point $a_k$, and 
$\{k\in \mathbb{N}; \, a_{k+1}\leq 2^{n}\}$ has cardinality at least ${\delta(\widetilde{\Lambda}_\varepsilon)^2}n/4$, for
$n$ large enough.

Now consider $L_n=M_{n}^{1/n}$ and note that, since $L$ is increasing, for sufficiency large $n$,
\[ 
\begin{aligned}
\frac{1}{L_{2^n}}& \leq \prod_{a_{k+1} \leq 2^n} \frac{L_{a_k}}{L_{a_{k+1}}} \leq \prod_{a_{k+1} \leq 2^n} \frac{L_{a_k}}{L_{2a_k}} \leq
\prod_{a_{k+1}\le 2^{n}} (1-\varepsilon)\leq (1-\varepsilon)^{{\delta(\widetilde{\Lambda}_\varepsilon)^2 n/4}}.
\end{aligned}
\]
We conclude that
\[
\sum_{n\geq 1} \frac{1}{nL_n}=  \sum_{m\geq 0}  \sum_{n=2^{m}}^{2^{m+1}-1} \frac{1}{nL_{n}} \leq \sum_{m\geq 0} \frac{1}{L_{2^m}} \sum_{n=2^{m}}^{2^{m+1}-1} \frac{1}{n} \leq (\log 2)\sum_{m\geq 0} \frac{1}{L_{2^m}} <\infty
\]
(where we use {the fact that $1/L_{2^m}$} is bounded by a geometric series). This contradicts the Denjoy-Carleman criterion \eqref{eq:Quasianalytic} for quasianalyticity of $C_M$.
\end{proof}

\section{Construction of the main function}\label{sec:Construction}

\subsection{Ostrowski function.} The {\emph{Ostrowski function} $\varphi_M(r)$}  associated to the sequence $M$ is defined by
\[
\varphi_M(r):=\sup_{n\geq0} \frac{r^{n+2}}{M_n}
\]
{(see \cite{Ostrowski}). When there is no risk of confusion, we will denote $\varphi_M$ by $\varphi$. We recall the following 
well-known property of $\varphi_M$.}

\begin{lemma}[Property of Ostrowski function]\label{lem:PropertyVarphiI}
Let $M$ be a log convex sequence with $M_0=1$, and consider the sequence $m_k:=M_{k+1}/M_k$, 
$k\in \mathbb{Z}_{\geq 0}$. The function $\varphi = \varphi_M$ satisfies the property,
\begin{equation}\label{eq:PropertyVarphiI}
\frac{m_k^{2+k}}{\varphi(m_k)} = M_k\,.
\end{equation}
\end{lemma}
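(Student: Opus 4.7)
The plan is to rewrite the identity \eqref{eq:PropertyVarphiI} as the statement that, in the supremum defining $\varphi_M(m_k)$, the value $n=k$ is a maximizer; i.e.,
\[
\varphi_M(m_k) \;=\; \sup_{n \geq 0} \frac{m_k^{n+2}}{M_n} \;=\; \frac{m_k^{k+2}}{M_k}.
\]
After clearing the factor $m_k^{k+2}$, this reduces to proving
\[
\frac{M_k}{M_n} \;\leq\; m_k^{\,k-n}, \qquad \text{for all } n \geq 0,
\]
with equality at $n=k$. So the whole task is just this two-sided inequality.

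I would prove the inequality by telescoping and using the log-convexity hypothesis on $M$, which is equivalent to the statement that the sequence $(m_j)_{j \geq 0}$ is nondecreasing. For $n \geq k$, write
\[
\frac{M_n}{M_k} \;=\; m_k\, m_{k+1}\cdots m_{n-1} \;\geq\; m_k^{\,n-k},
\]
which rearranges to $M_k/M_n \leq m_k^{k-n}$. For $n < k$, write instead
\[
\frac{M_k}{M_n} \;=\; m_n\, m_{n+1}\cdots m_{k-1} \;\leq\; m_k^{\,k-n},
\]
since every factor is at most $m_k$. Equality plainly holds when $n=k$ (both sides are $1$).

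Combining the two cases gives $m_k^{n+2}/M_n \leq m_k^{k+2}/M_k$ for all $n\geq 0$, with equality at $n=k$, so the supremum $\varphi_M(m_k)$ is attained at $n=k$ and equals $m_k^{k+2}/M_k$. Rearranging yields \eqref{eq:PropertyVarphiI}. There is no real obstacle here: log-convexity is precisely the monotonicity of $(m_j)$ that makes both telescoping estimates go through simultaneously, which is exactly why $n=k$ is the unique natural candidate for the maximizer. The only thing to be slightly careful about is the case $n=0$, where the telescoping product $m_0 m_1 \cdots m_{k-1}$ has $k$ factors (and yields $M_k = M_k/M_0$ since $M_0=1$), so the argument covers it without modification.
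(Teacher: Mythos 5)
Your proof is correct and takes essentially the same approach as the paper: both show that the supremum defining $\varphi_M(m_k)$ (equivalently, the reciprocal infimum) is attained at $n=k$, using log-convexity in the form that the ratio sequence $(m_j)$ is nondecreasing. The paper phrases the two cases $n>k$ and $n<k$ via the ratio $M_k^{n-k}/M_{k+1}^{n-k}$, whereas you telescope the product $m_n\cdots m_{k-1}$ (resp. $m_k\cdots m_{n-1}$) directly, which is slightly cleaner but mathematically identical.
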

\begin{proof}
{Note that}
\[
\frac{m_k^{2+k}}{\varphi(m_j)} = \inf_{n\geq0} \frac{M_n}{m_k^{n-k}} = \inf_{n\geq0} \frac{M_n \cdot M_k^{n-k}}{M_{k+1}^{n-k}} \leq M_k\,.
\]
{By log-convexity of the sequence $(M_k)$ we get
\[
\frac{M_n \cdot M_k^{n-k}}{M_{k+1}^{n-k}} \geq \frac{M_{k} \cdot M_{k+1}^{n-k}}{M_{k+1}^{n-k}} =M_k,
\quad \text{ if }\, n>k,
\]
and
\[
\frac{M_n \cdot M_{k+1}^{k-n}}{M_{k}^{k-n}} \geq \frac{M_{k} \cdot M_{k}^{{k-n}}}{M_{k}^{{k-n}}} =M_k,
\quad \text{ if }\, n<k.
\]
It follows that \eqref{eq:PropertyVarphiI} holds.}
\end{proof}

\subsection{Brick function and {\it a priori} estimates.}\label{ssec:Brick} It is important to get tight estimates
{on} derivatives of the ``bricks" used in this work. In this section, we derive {\it a priori} estimates via Cauchy estimates.

\begin{remark}[Cauchy estimate]
Let $f:U \subset \mathbb{C}^2 \to \mathbb{C} $ be an holomorphic function. Fix a point $x \in U$, and positive real numbers $r_1$ and $r_2$ such that the bi-disk 
\[
\mathbb{D}_2 := \{|z_i-x_i|\leq R_i,\, i=1,2\}
\]
is contained in $U$. The Cauchy estimate is given by:
\begin{equation}\label{eq:Cauchy}
\left|\frac{\partial^{\alpha}f(x)}{\alpha! }\right| \leq   \frac{ \max_{z\in \mathbb{D}_2}|f(z)|  }{R_1^{{\alpha_1}}R_2^{{\alpha_2}}}, \quad \text{ for all }\, \alpha \in \mathbb{Z}_{\geq 0}^2.
\end{equation}
\end{remark}

\noindent
Given constants $q \geq 1$, $m\ge 1$ and $0<\rho<1$, we consider the following {\emph{brick function}}:
\[
u_{q,m,\rho}(x) := \frac{\rho^2}{\rho^2+(x_1-\rho q)^2 + (m x_2)^2}.
\]
We {will use the following to estimate the derivatives of the brick function}.

\begin{lemma}[{\it A priori} estimate]\label{lem: triv der formula 2var}
{For every} $c \in \mathbb{R}_{>0}$ and $\alpha=(\alpha_1,\alpha_2)\in\mathbb{Z}_{\geq0}^2$, 
	\[  
	\left|\frac{1}{\alpha!} \cdot \partial^\alpha\left(\frac{1}{c+x^2_1+x_2^2}\right)\right| \leq \frac{8\cdot 8^{{|\alpha|}}}{(c+x_1^2+x_1^2)^{1+|\alpha|/2}}.  
	\]
\end{lemma}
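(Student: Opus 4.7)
My plan is to prove the lemma by a direct Cauchy estimate on a carefully chosen bidisk. Extend
\[
f(z_1,z_2) := \frac{1}{c+z_1^2+z_2^2}
\]
to a holomorphic function on an appropriate neighborhood of $(x_1,x_2)\in\IR^2$ in $\IC^2$. Set
\[
R := \sqrt{c+x_1^2+x_2^2},
\]
and consider the bidisk $\mathbb{D}_2 := \{z \in \IC^2 : |z_i-x_i|\le R/8,\ i=1,2\}$. The main task is to show that $f$ is holomorphic on $\mathbb{D}_2$ with a good upper bound for $|f|$.

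For $z\in\mathbb{D}_2$, I would expand
\[
c+z_1^2+z_2^2 = R^2 + 2x_1(z_1-x_1)+(z_1-x_1)^2+2x_2(z_2-x_2)+(z_2-x_2)^2,
\]
and use the triangle inequality together with $|x_i|\le R$ to bound
\[
\bigl|c+z_1^2+z_2^2\bigr| \ge R^2 - \frac{|x_1|+|x_2|}{4}R - \frac{R^2}{32}.
\]
The constraint $x_1^2+x_2^2\le R^2$ yields $|x_1|+|x_2|\le\sqrt{2}\,R$, so the right side is at least $(1-\sqrt{2}/4-1/32)R^2 > R^2/2$. Consequently $|f(z)|\le 2/R^2$ throughout $\mathbb{D}_2$.

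The Cauchy estimate \eqref{eq:Cauchy} with $R_1=R_2=R/8$ then gives
\[
\left|\frac{\partial^\alpha f(x)}{\alpha!}\right|\;\le\;\frac{2/R^2}{(R/8)^{\alpha_1}(R/8)^{\alpha_2}} \;=\; \frac{2\cdot 8^{|\alpha|}}{R^{2+|\alpha|}} \;=\; \frac{2\cdot 8^{|\alpha|}}{(c+x_1^2+x_2^2)^{1+|\alpha|/2}},
\]
which is even sharper than the stated bound $8\cdot 8^{|\alpha|}/(c+x_1^2+x_2^2)^{1+|\alpha|/2}$. The only subtle point is the quantitative verification that the bidisk of radius $R/8$ stays inside the domain of holomorphy with the quantitative lower bound $R^2/2$; the numerical constants are forgiving enough that the estimate goes through, and the factor $8$ on the right-hand side in the statement accommodates this choice of radii directly.
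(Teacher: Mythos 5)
Your proposal is correct and follows essentially the same route as the paper's proof: complexify, take the bidisk of radius $\tfrac{1}{8}\sqrt{c+x_1^2+x_2^2}$ around $x$, bound $|c+z_1^2+z_2^2|$ from below via the triangle inequality and $|x_1|+|x_2|\le\sqrt{2}\sqrt{c+x_1^2+x_2^2}$, then apply the Cauchy estimate. Your lower bound $(1-\sqrt{2}/4-1/32)R^2 > R^2/2$ is even a bit sharper than the paper's $R^2/8$, giving the constant $2$ in place of $8$, but both satisfy the stated inequality.
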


\begin{proof}
{Consider} the holomorphic function $f(z) =(c+z^2_1+z_2^2)^{-1}$ and a point $x\in \mathbb{R}^2$. 
{Let $\mathbb{D}_2$ now denote} the bi-disk
\[
\mathbb{D}_2:=\left\{(z_1,z_2)\in \mathbb{C}^2\;:\; |z_i-x_i|\leq {8}^{-1}\sqrt{c+x_1^2+x_2^2},\; i=1,2  \right\},
\]
and note that $|x_1|+|x_2| \leq \sqrt{2}\sqrt{c+x_1^2+x_2^2}$. It follows from the triangle inequality that
\[
\begin{aligned}
\max_{z\in \mathbb{D}_2}|f(z)| &= \max_{z\in \mathbb{D}_2} \left(c+ (z_1- x_1+x_1)^2+(z_2- x_2+x_2)^2\right)^{-1}\\
&\leq \max_{z\in \mathbb{D}_2} \left( c+ x_1^2 + x_2^2 - 2\, |x_1|\cdot |z_1- x_1| - 2\, |x_2| \cdot|z_2- x_2|\right.\\
& \quad \quad \quad \quad \quad \quad \quad \quad \quad \quad \quad \quad \quad \quad \quad \quad 
\left.- |z_1- x_1|^2-|z_2- x_2|^2\right)^{-1}\\
&\leq  \left( {8}^{-1}(c+ x_1^2 + x_2^2)\right)^{-1} = {8} f(x).
\end{aligned}
\]
Using the Cauchy estimate \eqref{eq:Cauchy}, we get
\[  
\left|\frac{\partial^\alpha f(x)}{\alpha !}\right|\,\leq\, \max_{z\in \mathbb{D}_2}|f(z)| \cdot \left(\frac{\sqrt{x_1^2+x_2^2+c}}{{8}}\right)^{-{|\alpha|}} \leq\, \frac{{8} \cdot {8}^{{|\alpha|}}}{\left(x_1^2+x_2^2+c\right)^{1+|\alpha|/2}}. \qedhere
\]
\end{proof}	

\begin{remark}[A priori estimate on the brick function]
{It follows from Lemma \ref{lem: triv der formula 2var} that, for every $q \geq 1$, $m\ge 1$, $0<\rho<1$ and
$\alpha=(\alpha_1,\alpha_2)\in\mathbb{Z}_{\geq0}^2$,}
\[
\left|\frac{\partial^\alpha u_{q,m,\rho}(x)}{\alpha!} \right| \leq  \rho^2 \cdot m^{\alpha_2} \cdot {8^{|\alpha|+{1}}} \cdot \left(\frac{u_{q,m,\rho}(x)}{\rho^2}\right)^{1+|\alpha|/2}.
\]
\end{remark}

\smallskip
Now let us consider the blowing-up $\s: \IR^2 \to \IR^2$ (in polar coordinates),
\[
\sigma(r,\theta) = (r \cos\theta,\,r \sin\theta),
\]
and the composite {brick function}
\[
\begin{aligned}
v_{q,m,\rho}(r,\theta) &:= u_{q,m,\rho} \circ \sigma (r,\theta) = \frac{\rho^2}{\rho^2+ (r \cos\theta -\rho q)^2 +  (m\cdot r \sin\theta)^2}\,.
\end{aligned}
\]

\begin{lemma}[{\it A priori} estimate on the blowing-up of the brick function]\label{lem: base function der in polar coor}
There is a (universal) constant $C>0$ such that for all real numbers $q\geq 1$, $m\ge 1$, $0<\rho<1$, 
{as well as all} $\alpha\in\mathbb{Z}_{\ge 0}^2$, $r\ge 0$ and $|\theta|\leq\pi$, 
\[
	\left|\frac{\partial^\alpha v_{q,m,\rho}(r,\theta)}{\alpha!}\right| \leq  m^{|\alpha|}\left( 1 + q\rho \right)^{\alpha_2}  \cdot C^{|\alpha|+1}.
	\] 
\end{lemma}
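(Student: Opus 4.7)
The plan is to obtain the estimate as a consequence of Cauchy's inequality \eqref{eq:Cauchy}, applied to the natural holomorphic extension of $v_{q,m,\rho}$ to $\mathbb{C}^2$. Setting $(z,w) = (r+s,\theta+t)$ with $(s,t) \in \mathbb{C}^2$, one has
\[
\tilde v(z,w) = \frac{\rho^2}{\tilde D(z,w)},\qquad \tilde D(z,w) := \rho^2 + (z\cos w - \rho q)^2 + m^2 z^2 \sin^2 w,
\]
meromorphic on $\mathbb{C}^2$, holomorphic away from the zero set of $\tilde D$. For a fixed real point $(r,\theta)$, I would work on the bi-disk
$\mathbb{D}_2 := \{|s|\leq R_1\}\times\{|t|\leq R_2\}$ centered at $(r,\theta)$, with
\[
R_1 := c_1/m,\qquad R_2 := c_2/\bigl(m(1+q\rho)\bigr),
\]
for small universal constants $c_1,c_2 \in (0,1)$ to be fixed. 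These radii are chosen precisely so that Cauchy's inequality produces the factors $m^{|\alpha|}(1+q\rho)^{\alpha_2}$ in the final estimate.

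The crux of the argument is a uniform bound $|\tilde v(z,w)| \leq C_0$ on $\mathbb{D}_2$, equivalently $|\tilde D(z,w)| \geq \rho^2/C_0$, for some universal $C_0$. Writing $A_0 := r\cos\theta - \rho q$ and $B_0 := mr\sin\theta$, so that $D(r,\theta) = \rho^2 + A_0^2 + B_0^2$, and introducing the complex perturbations $\delta A := z\cos w - r\cos\theta$ and $\delta B := m(z\sin w - r\sin\theta)$, one has
\[
\tilde D(z,w) = D(r,\theta) + 2A_0\,\delta A + 2B_0\,\delta B + (\delta A)^2 + (\delta B)^2.
\]
The elementary bounds $|\cos w|,|\sin w| \leq e$ and $|\cos w - \cos\theta|,|\sin w - \sin\theta| \leq e|t|$, valid on $|t| \leq 1$, give
\[
|\delta A| \leq e(R_1 + rR_2),\qquad |\delta B| \leq m\,e(R_1 + rR_2),
\]
while $|A_0|,|B_0| \leq \sqrt{D(r,\theta)}$ follows from the definition of $D$. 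A key observation is that $m(R_1 + rR_2) = c_1 + rc_2/(1+q\rho)$: this $r$-dependence is matched to the growth of $\sqrt{D}$ with $r$ (since $\sqrt{D}$ grows at least linearly in $r$ away from $r = \rho q$). Choosing $c_1,c_2$ small enough, I would conclude that $|2A_0\,\delta A + 2B_0\,\delta B| + |\delta A|^2 + |\delta B|^2 \leq D(r,\theta)/2$, hence $|\tilde D| \geq D(r,\theta)/2 \geq \rho^2/2$, so $|\tilde v| \leq 2$ on $\mathbb{D}_2$.

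With this bound in hand, Cauchy's inequality applied to $\tilde v$ on $\mathbb{D}_2$ yields
\[
\left|\frac{\partial^\alpha v(r,\theta)}{\alpha!}\right| \leq \frac{2}{R_1^{\alpha_1} R_2^{\alpha_2}} = 2\,c_1^{-\alpha_1} c_2^{-\alpha_2}\, m^{|\alpha|}(1+q\rho)^{\alpha_2},
\]
which gives the desired estimate with $C := \max(2,\,1/c_1,\,1/c_2)$. The main technical obstacle is the uniform lower bound on $|\tilde D|$: near the peak $(r,\theta) \approx (\rho q, 0)$ one has $D \approx \rho^2$ (the smallest possible value), while in the far-field regime $D$ grows like $r^2$ or $m^2 r^2$, and one must verify $|\tilde D - D| \leq D/2$ in both regimes simultaneously. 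This is most cleanly organized by a short case analysis based on the relative sizes of $r$ and $\rho q$, exploiting in the near-peak region the fact that $rR_2 \leq \rho q \cdot c_2/(m(1+q\rho)) \leq c_2/m$ (using $\rho q/(1+q\rho) \leq 1$), and in the far-field region the dominant growth of $\sqrt{D}$ over $m(R_1+rR_2)$.
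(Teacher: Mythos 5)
Your overall strategy --- extend $v_{q,m,\rho}$ holomorphically, bound its modulus on a bi-disk about $(r,\theta)$ of radii $R_1,R_2$, and apply the Cauchy estimate --- is the same one the paper uses. The substantive difference is that you choose radii $R_1=c_1/m$, $R_2=c_2/(m(1+q\rho))$ that are independent of the base point, whereas the paper lets $R_1,R_2$ depend on $(r,\theta)$ through $\sqrt{\rho^2+x_1^2+x_2^2}$ (with $x_1=r\cos\theta-q\rho$, $x_2=mr\sin\theta$). However, there is a genuine gap at the peak. At $(r,\theta)=(q\rho,0)$ one has $D(q\rho,0)=\rho^2$, so $\sqrt{D}=\rho$. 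On the slice $t=0$ your extension is $\tilde{v}(r+s,0)=\rho^2/\bigl(\rho^2+(r+s-q\rho)^2\bigr)$, which has poles at $s=\pm i\rho$; these lie inside the disk $|s|\le R_1=c_1/m$ as soon as $\rho<c_1/m$, and then $|\tilde{v}|$ is unbounded on your bi-disk, so the claimed $|\tilde{v}|\le 2$ is false. In your own notation, $X:=me(R_1+rR_2)=e\bigl(c_1+rc_2/(1+q\rho)\bigr)\ge ec_1$ is a universal constant, while the inequality you need is roughly $X\lesssim\sqrt{D}=\rho$; the near-peak observation $rR_2\le c_2/m$ controls only the second summand of $X$ and does nothing for the $R_1$ contribution, so no choice of $c_1,c_2$ fixes this uniformly in $\rho\in(0,1)$.

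In fact, the inequality cannot hold as written. A direct computation gives $\partial_r^2 v_{q,m,\rho}(q\rho,0)=-2/\rho^2$, so with $\alpha=(2,0)$ the left-hand side equals $\rho^{-2}$, which is unbounded as $\rho\to 0$, while the right-hand side is $m^2 C^3$, independent of $\rho$. Some loss in $\rho$ (a factor like $\rho^{-|\alpha|}$, or equivalently a compensating factor like $v^{1+|\alpha|/2}$) must appear, as it does in Lemma~\ref{lem: triv der formula 2var} and in Remark~\ref{lem: f estimates}. The same obstruction is hidden in the printed proof as well: at the peak its $R_1=(64\rho)^{-1}\rho=1/64$ is likewise a fixed constant, so its bi-disk also reaches the poles for small $\rho$. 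This is therefore an issue with the statement to flag, not something to engineer around within a Cauchy-estimate proof of the stated bound.
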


\begin{proof}
Fix $r\geq 0$ and $\theta \in [-\pi,\pi]$, and let $x_1 = r\cos\theta -q\rho$, $x_2 = m r\sin\theta$. By Taylor approximation, there exists $A>1$ such that for every complex number $w\in \mathbb{C}$ {with} $|w-\theta|\leq 1$, we have
\begin{equation}\label{eq:TaylorSinCos}
\begin{aligned}
|\sin\theta-\sin w| &\leq A\cdot |\theta-w|,\\
|\cos\theta -\cos w| &\leq |\sin\theta|\cdot |\theta-w| + A\cdot |\theta-w|^2.
\end{aligned}
\end{equation}
We now consider the bi-disc $\widetilde{\mathbb{D}}_2:= \left\{(w_1,w_2)\in \mathbb{C}^2:   |w_1-r|\leq R_1,\, |w_2-\theta|\leq R_2 \right\}$, where the ratios are given by
\[
R_1 := \frac{({64}\rho)^{-1}\sqrt{\rho^2+ x_1^2 + x_2^2}}{|\cos\theta|+m|\sin\theta|}, \quad 
R_2 := \min\left\{1,\, \frac{({64}\rho)^{-1}\sqrt{\rho^2+ x_1^2 + x_2^2}}{m\cdot r \cdot A  +r \cdot |\sin\theta| +\sqrt{r\cdot A}}\right\},
\]
and we provide estimates for $R_1$, $R_2$ and $\max_{w\in \widetilde{\mathbb{D}}_2}|v_{q,m,\rho}(z)|$ which allow us to conclude using Cauchy estimates. Indeed, {since $m>1$ and} $A>1$,
\[
\begin{aligned}
\frac{1}{R_1} & = {64}\rho \cdot \frac{|\cos\theta|+m|\sin\theta|}{\sqrt{(r\cos\theta-q\rho)^2+m^2(r\sin\theta)^2+\rho^2}} \\ &\leq {64}\left(1 + \frac{\rho m|\sin\theta|}{\sqrt{(r\cos\theta-q\rho)^2+(r\sin\theta)^2+\rho^2}} \right) \leq {128} m;\\[1em]
\frac{1}{R_2} & = \max\left\{1,\, {64}\rho\cdot\frac{m\cdot r \cdot A  +r \cdot |\sin\theta| +\sqrt{r\cdot A}}{\sqrt{\rho^2+ x_1^2 + x_2^2}}\right\}\\
&\leq  {8^3} \cdot A \cdot m \cdot  \max\left\{1,\, \frac{ 3\rho r }{\sqrt{\rho^2+ x_1^2 + x_2^2}}\right\},
\end{aligned}
\]
where in the last inequality we used the fact that either $r<1$ and the max is smaller than $3$, or $r\ge 1$ and $\sqrt{r} \leq r$. {Since} $m>1$ and 
$\cos\theta \leq 1$, we get
\[
\begin{aligned}
\frac{1}{R_2} & \leq {8^3} \cdot A \cdot m \cdot  \max\left\{1,\, \frac{ 3\rho r }{\sqrt{\rho^2+ (r\cdot \cos(\theta) - q\rho)^2 + (mr\sin(\theta))^2}}\right\}\\
&\leq {8^3} \cdot A \cdot m \cdot  \max\left\{1,\, \frac{ 3\rho r }{\sqrt{\rho^2+ (r-q\rho)^2 }}\right\}\\
&\leq {8^4} \cdot A \cdot m \cdot  (1+ \rho q).
\end{aligned}
\]
{Let $z=(z_1,z_2)$ denote the complexification of $x$, i.e.,}
\[
z_1 = w_1 \cdot \cos w_2 - q, \quad z_2 = m \cdot w_1 \sin w_2
\]
{(where $w_1,w_2,z_1,z_2\in\IC$),} so that, using the triangle inequality, we get
\[ 
\begin{aligned}
	|z_1-x_1| &\leq r|\cos w_2-\cos\theta| + |w_1-r||\cos\theta|,\\
	\frac{|z_2-x_2|}{m}&\leq r|\sin  w_2-\sin \theta| +|w_1-r||\sin\theta|.\end{aligned}
\]
Now, {by the choices of $R_1$ and $R_2$,} for all $w \in \widetilde{\mathbb{D}}_2$, 
we have
\[ 
	\begin{aligned}
	|z_1-x_1|&\leq  |w_1-r||\cos\theta| + r\left(|\sin\theta|\cdot|w_2-\theta|+A\cdot \right |w_2-\theta|^2 )\\ 
	&\leq  ({64}\rho)^{-1}\sqrt{\rho^2+ x_1^2 + x_2^2},\\[.5em]
	|z_2-x_2|&\leq  m\cdot |w_1-r||\sin\theta| + r\cdot A\cdot m\cdot |w_2-\theta|\\
	&\leq ({64}\rho)^{-1}\sqrt{\rho^2+ x_1^2 + x_2^2},
	\end{aligned}
	\]
which implies that, {if $w \in \widetilde{\mathbb{D}}_2$, then} $z \in \mathbb{D}_2$, where $\mathbb{D}_2$ is the bi-disc {in} the proof of Lemma \ref{lem: triv der formula 2var}. If follows from the estimate in 
Lemma \ref{lem: triv der formula 2var} that
\[
\max_{w\in \widetilde{\mathbb{D}}_2}|v_{q,m,\rho}(z)| \leq \max_{z\in \mathbb{D}_2}|(1+ z_1^2 + z_2^2)^{-1}| \leq \frac{4}{1+ x_1^2+x_2^2} \leq 4.
\]
We conclude {using Cauchy estimates, taking} $C>0$ sufficiently big (for example, $C ={8^4} A$).
\end{proof}

\subsection{Building block and a priori estimates}\label{ssec:Build}
We start {with the existence of an important} function.

\begin{proposition}[Base function]\label{prop: h def}
Given a log convex sequence $M$ with $M_0=1$, there exists a 
``base function'' $h\in C^\infty (\mathbb{R}^2)$ with the following properties:
\begin{enumerate}
	\item[(i)] 	  for every $x=(x_1,x_2)\in\mathbb{R}^2$ and  $\alpha\in\mathbb{Z}_{\geq 0}^2$,
	\[
	\left|\partial^\alpha h(x)\right|\leq \frac{{64} \cdot {8}^{|\alpha|+1} \alpha!}{(1+ x_1^2+x_2^2)^{1+|\alpha|/2}}  \cdot M_{\alpha_2},
	\]
	\item[(ii)] for any $n \in \mathbb{Z}_{\geq 0}$, 
	\[
	\left|\frac{\partial^{2n}}{\partial x_2^{2n}} h(0,0)\right|\geq \frac{(2n)!}{2^{2n}} \cdot M_{2n}.
	\]
\end{enumerate}
\end{proposition}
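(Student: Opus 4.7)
The strategy is to build $h$ as an infinite weighted sum of elementary bricks of the form $1/(1 + x_1^2 + m_k^2 x_2^2)$, where $m_k := M_{k+1}/M_k$ (non-decreasing by log-convexity of $(M_k)$). Concretely, I would set
\[
h(x) := \sum_{k=0}^{\infty} c_k \cdot \frac{1}{1 + x_1^2 + m_k^2 x_2^2}, \qquad c_k := \frac{M_k}{2^k \, m_k^k}.
\]
This can be viewed as a sum of (degenerate, with $q=0$ and $\rho=1$) brick functions $c_k u_{0, m_k, 1}$. The factor $M_k / m_k^k$ is calibrated so that, at $k = 2n$, the $k$th summand contributes precisely $M_{2n}$ to the $2n$-th $x_2$-derivative at the origin; the damping factor $2^{-k}$ is what makes the overall series summable. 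I assume the standard normalization $M_1 \geq 1$, which yields $m_k \geq 1$ for every $k$ by monotonicity; this is harmless in the setup of the paper.

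\textbf{Proof of (i).} The Cauchy estimate of Lemma \ref{lem: triv der formula 2var}, applied to $g(z_1, z_2) = 1/(1 + z_1^2 + z_2^2)$ and then composed with the scaling $z_2 \mapsto m_k z_2$, gives
\[
\frac{|\partial^\alpha u_{0, m_k, 1}(x)|}{\alpha!} \leq \frac{8 \cdot 8^{|\alpha|} \, m_k^{\alpha_2}}{(1 + x_1^2 + m_k^2 x_2^2)^{1 + |\alpha|/2}} \leq \frac{8 \cdot 8^{|\alpha|} \, m_k^{\alpha_2}}{(1 + x_1^2 + x_2^2)^{1 + |\alpha|/2}},
\]
the last inequality using $m_k \geq 1$. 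The combinatorial core of the argument is the telescoping inequality
\[
M_k \, m_k^{j - k} \leq M_j \qquad \text{for all } j, k \in \mathbb{Z}_{\geq 0},
\]
which follows from monotonicity of $(m_k)$: for $k \leq j$ one has $m_k^{j - k} \leq m_k m_{k+1} \cdots m_{j-1} = M_j / M_k$, and for $k > j$, $m_k^{k - j} \geq m_j m_{j+1} \cdots m_{k-1} = M_k / M_j$. Hence $\sum_k c_k m_k^{\alpha_2} = \sum_k M_k m_k^{\alpha_2 - k} / 2^k \leq 2 M_{\alpha_2}$, which gives uniform convergence on compacta of the formally differentiated series and proves (i) with a constant $2 \cdot 8^{|\alpha| + 1}$, well inside the stated $64 \cdot 8^{|\alpha|+1}$.

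\textbf{Proof of (ii); main obstacle.} A direct geometric series expansion gives $\partial^{2n}_{x_2} u_{0, m_k, 1}(0, 0) = (-1)^n (2n)! \, m_k^{2n}$. Crucially all these numbers have the same sign $(-1)^n$, so no cancellation occurs:
\[
\bigl| \partial^{2n}_{x_2} h(0, 0) \bigr| = (2n)! \sum_{k \geq 0} c_k m_k^{2n} \geq (2n)! \cdot c_{2n} m_{2n}^{2n} = \frac{(2n)!}{2^{2n}} \, M_{2n},
\]
which is exactly (ii). The main tension in the construction is that $\sum_k c_k m_k^{\alpha_2}$ must be \emph{finite}, yet the single summand $k = 2n$ of $\sum_k c_k m_k^{2n}$ must simultaneously supply the sharp lower bound $M_{2n}$. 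The choice $c_k = M_k / (2^k m_k^k)$ resolves this tension: the telescoping estimate bounds every term by $M_{\alpha_2}/2^k$, while $\sum 2^{-k}$ is exactly summable, and the $2^{-2n}$ factor in (ii) is precisely the cost paid at the maximizing index $k = 2n$.
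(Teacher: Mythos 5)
Your construction is the paper's construction exactly, just written without the Ostrowski-function notation: by Lemma~\ref{lem:PropertyVarphiI}, $m_k^{2+k}/\varphi_M(m_k) = M_k$, so your coefficient $c_k = M_k/(2^k m_k^k)$ equals the paper's $m_k^2/(2^k \varphi(m_k))$, and your telescoping inequality $M_k m_k^{j-k}\le M_j$ is precisely the content of that lemma (equivalently, of the defining supremum of $\varphi_M$). The argument and estimates then match the paper's step for step, with your constant even slightly tighter, so the proposal is correct and essentially identical in method.
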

\begin{proof}
{Let $h(x)$ denote the function}
\[
h(x):= \sum_{k\geq 1} \frac{m_k^2}{2^k \varphi(m_k)} \cdot \frac{1}{x^2_1+(m_kx_2)^2+1},
\]
where $m_k = M_{k+1}/M_k$ and {$\varphi = \varphi_M$} is the Ostrowski function. By Lemma \ref{lem: triv der formula 2var} and the chain rule, 
\begin{equation}\label{eq: h y der 1st estimate}
\left|\partial^{\alpha} h(x)\right|\leq {64} \cdot {8}^{|\alpha|+1}\alpha! \sum_{k\geq1} \frac{m_k^2}{2^k \varphi(m_k)} \cdot \frac{m_k^{\alpha_2}}{(x_1^2+(m_kx_2)^2+1)^{1+|\alpha|/2}},
\end{equation}
for every $\alpha \in \mathbb{Z}^2_{\geq 0}$. It follows from {the definition of $\varphi$} and the fact that $m_k\geq 1$ that
\[
\begin{aligned}
\left|\partial^{\alpha} h(x)\right|&\leq  \frac{{64} \cdot{8}^{|\alpha|+1}\alpha!}{(x_1^2+x_2^2+1)^{1+|\alpha|/2}} \sum_{k\geq1} \frac{m_k^{2+\alpha_2}}{2^k \varphi(m_k)}\\
&\leq \frac{{64} \cdot{8}^{|\alpha|+1}\alpha!}{(x_1^2+x_2^2+1)^{1+|\alpha|/2}} M_{\alpha_2},
\end{aligned}
\]
proving the upper-bound estimate (i). Next, from the Taylor expansion of $(1+z)^{-1}$, we get
\[
\frac{\partial^{2n}}{\partial x_2^{2n}} h(0,0)=(-1)^n (2n)! \sum_{k\geq 1} \frac{1}{2^{k}}\cdot \frac{m_k^{2n+2}}{\varphi(m_k)}\,,
\]
and, using Lemma \ref{lem:PropertyVarphiI}, we conclude that
\[
\begin{aligned}
\left| \frac{\partial^{2n}}{\partial x_2^{2n}} h(0,0) \right| &= (2n)! \left| \sum_{k\geq 1} \frac{1}{2^{k}}\cdot \frac{m_k^{2n+2}}{\varphi(m_k)} \right|\geq \frac{(2n)!}{2^{2n}}\cdot \frac{m_{2n}^{2n+2}}{\varphi(m_{2n})}  {=} \frac{(2n)!}{2^{2n}} \cdot M_{2n},
\end{aligned}
\]
proving the lower bound {estimate (ii)}. 
\end{proof} 

\noindent
Now, let $\rho \in (0,1)$ and let $q=(q,0)\in\mathbb{R}^2$. We consider the function
\[
f_{q,\rho}(x) := h\left( \frac{x}{\rho}-q \right),
\]
and we denote by $p = (\rho\cdot q,0)$ the associated centre point. 

\begin{remark}[{\it A priori} estimates on the building block]\label{lem: f estimates}
It follows from Proposition \ref{prop: h def} and the chain rule, that for every $\rho>0$ and $q \geq 1$, the function $f_{q,\rho}(x)$ satisfies the following estimates:
	\begin{enumerate}
		\item[(i)] for every $x\in\mathbb{R}^2$ and  $\alpha\in\mathbb{Z}_{\geq 0}^2$,
		\[
		| \partial^{\alpha} f_{q,\rho}(x)| \leq \frac{{64} \rho^2 \cdot {8}^{|\alpha|+1}\alpha ! M_{\alpha_2}}{\left(\Vert x- q\rho \Vert^2+\rho^2\right)^{1+|\alpha|/2}}\,	.
		\]
		\item[(ii)] for every $n \in \mathbb{Z}_{\geq 0}$, 
		\[
		|\partial_{x_2}^{2n} f_{q,\rho}(p)| \geq \frac{(2n)! \cdot M_{2n}}{4^n \cdot \rho^{2n}}\,.
		\]
	\end{enumerate}
\end{remark}

\noindent
Now, let us consider the blowing-up $\sigma: \IR^2 \to \IR^2$ (in polar coordinates),
\[
\sigma(r,\theta) = (r \cos\theta,\, r \sin\theta).
\]
Set $g_{\rho,q}:= f_{\rho,q}\circ \sigma (r,\theta)$. We summarize the main properties of $g_{\rho,q}$:

\begin{lemma}[{\it A priori} estimate on the blowing-up of the building block]\label{lem: g der estiamtes} Let $M=(M_{n})$ denote a log convex sequence starting with ${M_0 =} M_1=1$. There is a universal constant $C>0$ such that, for every $\rho\in (0,1)$, $q\geq 1$, $\alpha \in \mathbb{Z}_{\geq 0}^{2}$, $r>0$ and 
$\theta \in [-\pi,\pi]$,
	\[
\left| \partial^{\alpha} g_{q,\rho}(r,\theta)\right|\le C^{|\alpha|+1} \cdot \left( 1 + q\rho \right)^{\alpha_2}\cdot\alpha! \cdot M_{|\alpha|}.
\]
\end{lemma}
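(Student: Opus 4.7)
The plan is to follow exactly the same strategy used for the base function $h$ in Proposition \ref{prop: h def}: decompose $g_{q,\rho}$ term-by-term using the sum defining $h$, apply the a priori estimate already proved for the blown-up bricks (Lemma \ref{lem: base function der in polar coor}) to each term, and close the resulting series using the defining property of the Ostrowski function (Lemma \ref{lem:PropertyVarphiI}).

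First I would unwind the definitions. Writing $m_k := M_{k+1}/M_k$ and $\varphi := \varphi_M$, the construction of $h$ in Proposition \ref{prop: h def} shows that $f_{q,\rho}(x) = h(x/\rho - q)$ is nothing but
\[
f_{q,\rho}(x) = \sum_{k\geq 1} \frac{m_k^2}{2^k\,\varphi(m_k)}\, u_{q,m_k,\rho}(x),
\]
where $u_{q,m,\rho}$ is the brick function of Section \ref{ssec:Brick}. Composing with $\sigma$ gives
\[
g_{q,\rho}(r,\theta) = \sum_{k\geq 1}\frac{m_k^2}{2^k\,\varphi(m_k)}\, v_{q,m_k,\rho}(r,\theta).
\]
The normalization $M_0 = M_1 = 1$, combined with log convexity, guarantees that the sequence $(m_k)$ is nondecreasing with $m_k \geq m_0 = 1$, so each $v_{q,m_k,\rho}$ falls within the hypotheses of Lemma \ref{lem: base function der in polar coor}.

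Next I would differentiate termwise (justified by the absolute convergence that Proposition \ref{prop: h def} already records) and apply Lemma \ref{lem: base function der in polar coor} to each summand, obtaining
\[
\left|\frac{\partial^\alpha g_{q,\rho}(r,\theta)}{\alpha!}\right|
\;\leq\; C^{|\alpha|+1}\,(1+q\rho)^{\alpha_2}\sum_{k\geq 1}\frac{1}{2^k}\cdot\frac{m_k^{|\alpha|+2}}{\varphi(m_k)}.
\]
Now Lemma \ref{lem:PropertyVarphiI} (or equivalently the definition $\varphi(r) = \sup_n r^{n+2}/M_n$ applied with $n=|\alpha|$) gives $m_k^{|\alpha|+2}/\varphi(m_k) \leq M_{|\alpha|}$ uniformly in $k$. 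Consequently
\[
\left|\frac{\partial^\alpha g_{q,\rho}(r,\theta)}{\alpha!}\right|
\;\leq\; C^{|\alpha|+1}\,(1+q\rho)^{\alpha_2}\,M_{|\alpha|}\sum_{k\geq 1}\frac{1}{2^k}
\;\leq\; C^{|\alpha|+1}\,(1+q\rho)^{\alpha_2}\,M_{|\alpha|},
\]
and multiplying through by $\alpha!$ produces the claimed estimate (after enlarging $C$, if necessary, to absorb the geometric sum).

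There is essentially no obstacle beyond bookkeeping: the substantive analytic work is already contained in Lemma \ref{lem: base function der in polar coor} (the Cauchy-estimate choice of polydisc in the $(r,\theta)$ variables) and in Lemma \ref{lem:PropertyVarphiI} (the sharpness of the Ostrowski function). The only small point to watch is uniformity of the constant $C$ across the terms of the series, which holds because Lemma \ref{lem: base function der in polar coor} gives a constant independent of $m$; and the fact that the normalization $M_0 = M_1 = 1$ yields $m_k \geq 1$, which is the standing hypothesis of that lemma.
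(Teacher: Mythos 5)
Your argument is correct and follows the paper's proof essentially verbatim: decompose $g_{q,\rho}$ as the series $\sum_k \frac{m_k^2}{2^k\varphi(m_k)}\,v_{q,m_k,\rho}$, apply Lemma \ref{lem: base function der in polar coor} termwise (noting $m_k \geq 1$ from log-convexity and $M_0=M_1=1$), and close the series via the defining inequality $m_k^{|\alpha|+2}/\varphi(m_k) \leq M_{|\alpha|}$ of the Ostrowski function. The only cosmetic remark is that you appeal to Lemma \ref{lem:PropertyVarphiI} where the direct definition of $\varphi$ suffices (and is what the paper uses), but you already note this equivalence yourself.
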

\begin{proof}
{From the definitions} of $h$ and $f_{q,\rho}$, we obtain
	\[
	\begin{aligned}
	 g_{q,\rho}(r,\theta)&=\sum_{k\geq 0} \frac{m_k^2}{2^k \varphi(m_k)} \cdot \frac{\rho^2}{(r\cos\theta-q\rho)^2+(m_kr\sin\theta)^2+\rho^2}\\
	 &=\sum_{k\geq 0} \frac{m_k^2}{2^k \varphi(m_k)} v_{q,m_k,\rho}(r,\theta).
	\end{aligned}
	\]
	where $v_{q,m_k,\rho}(r,\theta) = u_{q,m_k,\rho}\circ \sigma$ is the brick function. Now, by Lemma~\ref{lem: base function der in polar coor} with $m=m_k$, we get
 \[ 
 \left|  \partial^{\alpha} g_{q,\rho} \right| \leq  C^{|\alpha|+1}\alpha!\cdot \left( 1 + q\rho \right)^{\alpha_2}  \sum_{k\geq 0} \frac{m_k^2}{2^k \varphi(m_k)} m_k^{|\alpha|} ,
 \] 
and the result follows from {the definition of $\varphi$}.
\end{proof}

\subsection{A flat construction}\label{ssec:Flat}
Let $M$ be an increasing log convex sequence such that $M_0= 1$. Let $E:[0,\infty) \to [0,1)$ 
denote an increasing continuous function such that 
\begin{equation}\label{eq:E-Estimate}
E(0)=0 \quad\text{and}  \quad\lim_{r\to 0^+}\frac{E(r)}{r}=\infty.
\end{equation}
Consider the sequences
\begin{equation}\label{eq:seqs}
\rho_n:= M_n/M_{n+1}, \quad q_n:=(E(\rho_n)/\rho_n,0).
\end{equation} 
{Note that, whenever the class $C_M$ properly contains the class of analytic functions, the sequence $(\rho_n)$ (which is non-increasing)
tends to zero.} Let 
$\Lambda\subset 2\mathbb{N}$ be an unbounded set of indices such that $q_{\lambda} > 1$, for every $\lambda \in \Lambda$. We denote by $\Gamma$ the data $(\Lambda,\rho_n,q_n)$, and we consider the function
\[
F_{\Gamma}({x}) := \sum_{\lambda\in\Lambda} \frac{f_{q_\lambda,\rho_\lambda}(x)}{\varphi(\rho_{\lambda}^{-1})2^\lambda}.
\]
{We will provide conditions on} the data $\Gamma$ in terms of the asymptotic behaviour of the sequence
\[
\delta_\lambda := 
\mbox{dist}\left(q_\lambda \rho_\lambda\;,\;\{q_{\lambda^\prime}\rho_{\lambda^\prime}\}_{\lambda^\prime\in\Lambda\setminus\{\lambda\}}\right)
=\mbox{dist}\left(E(\rho_{\lambda})\;,\;\{E(\rho_{\lambda^{\prime}})\}_{\lambda^\prime\in\Lambda\setminus\{\lambda\}}\right).
\]

\begin{lemma}[Regularity of $F_{\Gamma}$]\label{lem: Cinfty}
The function $F_{\Gamma}$ {belongs to $C_{M^2}(\mathbb{R}^2)$}.
\end{lemma}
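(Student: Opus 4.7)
\textbf{Proof proposal for Lemma \ref{lem: Cinfty}.} The plan is to differentiate the series defining $F_\Gamma$ termwise and to verify, via the \emph{a priori} estimates of Remark after Proposition \ref{prop: h def} together with the defining property of the Ostrowski function, that the resulting series converges uniformly on $\mathbb{R}^2$ with a bound of the form $A B^{|\alpha|} \alpha!\, M_{|\alpha|}^2$. This simultaneously shows that $F_\Gamma$ is $C^\infty$ and that it belongs to $C_{M^2}(\mathbb{R}^2)$.

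More precisely, first I would apply estimate (i) of Remark \ref{lem: f estimates} to bound
\[
|\partial^\alpha f_{q_\lambda,\rho_\lambda}(x)| \,\leq\, \frac{64\,\rho_\lambda^2\cdot 8^{|\alpha|+1}\alpha!\,M_{\alpha_2}}{(\|x-q_\lambda\rho_\lambda\|^2+\rho_\lambda^2)^{1+|\alpha|/2}}\,\leq\, 64\cdot 8^{|\alpha|+1}\alpha!\,M_{\alpha_2}\cdot\rho_\lambda^{-|\alpha|},
\]
uniformly in $x\in\mathbb{R}^2$, the second inequality being the crude estimate
$(\|x-q_\lambda\rho_\lambda\|^2+\rho_\lambda^2)^{1+|\alpha|/2}\geq \rho_\lambda^{|\alpha|+2}$.

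Next I would invoke the definition of the Ostrowski function to control the denominators $\varphi(\rho_\lambda^{-1})$ in the series for $F_\Gamma$. Since $\varphi(\rho_\lambda^{-1})\geq \rho_\lambda^{-(|\alpha|+2)}/M_{|\alpha|}$, the termwise bound becomes
\[
\left|\frac{\partial^\alpha f_{q_\lambda,\rho_\lambda}(x)}{\varphi(\rho_\lambda^{-1})\,2^\lambda}\right| \,\leq\, \frac{64\cdot 8^{|\alpha|+1}\alpha!\,M_{\alpha_2}\,M_{|\alpha|}\,\rho_\lambda^2}{2^\lambda}.
\]
Since $M$ is log convex with $M_0=1$ and increasing, $\rho_\lambda=M_\lambda/M_{\lambda+1}\leq 1$, so the factor $\rho_\lambda^2/2^\lambda$ is summable over $\lambda\in\Lambda$ (bounded by a geometric series with ratio $1/2$). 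Summing yields
\[
|\partial^\alpha F_\Gamma(x)| \,\leq\, A\cdot 8^{|\alpha|}\alpha!\, M_{\alpha_2}\,M_{|\alpha|}
\]
for a universal constant $A$. Finally, since $\alpha_2\leq |\alpha|$ and $(M_k)$ is increasing, $M_{\alpha_2}\leq M_{|\alpha|}$, so $M_{\alpha_2}M_{|\alpha|}\leq M_{|\alpha|}^2$, which gives the Denjoy--Carleman estimate \eqref{eq:DC} for the class $C_{M^2}$ (with constants independent of $x$).

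I do not expect any significant obstacle: the argument is a straightforward termwise differentiation, and the two ingredients that make it work (the $\rho_\lambda^{-|\alpha|}$ blowup from the spatial derivatives and the $\rho_\lambda^{|\alpha|+2}M_{|\alpha|}$ damping from $1/\varphi(\rho_\lambda^{-1})$) are precisely calibrated to cancel, leaving only the harmless factor $M_{\alpha_2}M_{|\alpha|}$ and the geometric weight $2^{-\lambda}$. The only thing to watch is the need to bound $M_{\alpha_2}M_{|\alpha|}$ by $M_{|\alpha|}^2$ (rather than by $M_{|\alpha|}$), which is exactly why one obtains $C_{M^2}$ regularity and not $C_M$ regularity of $F_\Gamma$.
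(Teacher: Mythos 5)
Your proposal is correct and takes essentially the same route as the paper: apply the \emph{a priori} estimate from Remark~\ref{lem: f estimates}(i), use the defining property of the Ostrowski function to get $1/\varphi(\rho_\lambda^{-1}) \le M_{|\alpha|}\rho_\lambda^{|\alpha|+2}$ (so the $\rho_\lambda^{-|\alpha|}$ blow-up cancels), sum the geometric weight $2^{-\lambda}$, and invoke the Weierstrass M-test. The only cosmetic difference is that you carry the factor $M_{\alpha_2}$ to the end and then bound $M_{\alpha_2}M_{|\alpha|}\le M_{|\alpha|}^2$, whereas the paper replaces $M_{\alpha_2}$ by $M_{|\alpha|}$ at the outset; the two computations are identical in content.
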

\begin{proof}

{By the definition of $\varphi$} and Remark \ref{lem: f estimates}, {if  
{$x\in \IR^2$}, then}
\begin{align*}
\left|\sum_{\lambda\in\Lambda} \frac{\partial^{\alpha}f_{q_\lambda,\rho_\lambda}(x)}{2^\lambda \varphi(\rho_{\lambda}^{-1})}\right| &\leq  {64}\cdot {8}^{|\alpha|+1}\alpha! M_{|\alpha|}\sum_{\lambda\in\Lambda}\frac{\rho_{\lambda}^2}{2^\lambda \varphi(\rho_{\lambda}^{-1})\rho_\lambda^{|\alpha|+2}}\\
&\leq {8}^{|\alpha|+3}\alpha! M_{|\alpha|}M_{|\alpha|}< \infty.
\end{align*}
It follows from the Weierstrass M-test that $F_{\Gamma}$ is $C^{\infty}$ at $\mathbb{R}^2$, and the 
bounds above guarantee that it is in {$C_{M^2}$}.
\end{proof}

\noindent
Now, {consider the blowing-up $\s(r,\theta) = (r \cos\theta, r \sin\theta)$,} and set $G_{\Gamma}:= F_{\Gamma}\circ \sigma (r,\theta)$. We obtain the following estimates.

\begin{lemma}[Estimates of $ F_{\Gamma} $ after blowing up]\label{lem: G upper bound}
The function $G_{\Gamma}$ {belongs to} $C_{M}(\mathbb{R}^2)$.
\end{lemma}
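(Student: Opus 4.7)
The plan is to apply Lemma~\ref{lem: g der estiamtes} termwise to the series
\[
G_{\Gamma}(r,\theta) = \sum_{\lambda \in \Lambda} \frac{g_{q_\lambda,\rho_\lambda}(r,\theta)}{\varphi(\rho_\lambda^{-1})\, 2^\lambda},
\]
and to verify that the resulting bound on $|\partial^\alpha G_\Gamma|$ has the form $A B^{|\alpha|} \alpha!\, M_{|\alpha|}$ required by Definition~\ref{def:DC}. Uniform convergence of the differentiated series (justifying the termwise differentiation) will be a byproduct of the same estimate.

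First, I would analyze the size of each term. Since $q_\lambda \rho_\lambda = E(\rho_\lambda) < 1$ by the hypothesis $E:[0,\infty)\to[0,1)$, the factor $(1+q_\lambda\rho_\lambda)^{\alpha_2}$ appearing in Lemma~\ref{lem: g der estiamtes} is at most $2^{|\alpha|}$, so
\[
|\partial^{\alpha} g_{q_\lambda,\rho_\lambda}(r,\theta)| \leq C^{|\alpha|+1}\, 2^{|\alpha|}\, \alpha!\, M_{|\alpha|}
\]
uniformly in $\lambda$, $r$, and $\theta$.

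Next, I would compute the normalizing factor. Applying Lemma~\ref{lem:PropertyVarphiI} with $k=\lambda$ (so $m_\lambda = M_{\lambda+1}/M_\lambda = \rho_\lambda^{-1}$) yields $\varphi(\rho_\lambda^{-1}) = \rho_\lambda^{-(\lambda+2)}/M_\lambda$, hence
\[
\frac{1}{\varphi(\rho_\lambda^{-1})} = M_\lambda\, \rho_\lambda^{\lambda+2}.
\]
Log-convexity of $M$ together with $M_0=1$ gives $M_\lambda^{1/\lambda} \leq M_{\lambda+1}^{1/(\lambda+1)}$, i.e.\ $M_\lambda^{\lambda+1} \leq M_{\lambda+1}^{\lambda}$, which rearranges to $M_\lambda\, \rho_\lambda^{\lambda} \leq 1$. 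Consequently $1/\varphi(\rho_\lambda^{-1}) \leq \rho_\lambda^2$, a quantity bounded uniformly in $\lambda$ (the sequence $\rho_\lambda$ is non-increasing, bounded by $\rho_0 \leq 1$).

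Combining these two steps,
\[
\sum_{\lambda\in\Lambda} \frac{|\partial^{\alpha} g_{q_\lambda,\rho_\lambda}(r,\theta)|}{\varphi(\rho_\lambda^{-1})\, 2^\lambda}
\;\leq\; C\,(2C)^{|\alpha|}\, \alpha!\, M_{|\alpha|} \sum_{\lambda\in\Lambda} \frac{\rho_\lambda^2}{2^\lambda}
\;\leq\; A\,(2C)^{|\alpha|}\, \alpha!\, M_{|\alpha|},
\]
for a finite constant $A$. This uniform bound justifies differentiating the series termwise and, simultaneously, certifies that $G_\Gamma \in C_M(\mathbb{R}^2)$ via Definition~\ref{def:DC}. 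The whole argument is essentially bookkeeping: there is no substantive obstacle, because the series was designed with the weight $1/(\varphi(\rho_\lambda^{-1})\cdot 2^\lambda)$ precisely to convert the factor $M_{|\alpha|}$ predicted by Lemma~\ref{lem: g der estiamtes} into a summable estimate. The only tiny subtlety worth flagging is the use of $q_\lambda\rho_\lambda=E(\rho_\lambda)<1$, which is exactly the reason why the growing factor $(1+q\rho)^{\alpha_2}$ in Lemma~\ref{lem: g der estiamtes}—which would otherwise destroy the bound for derivatives in the $\theta$-direction—is tamed.
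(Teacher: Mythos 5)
Your proof is correct and follows essentially the same route as the paper: decompose $G_\Gamma$ termwise, invoke Lemma~\ref{lem: g der estiamtes}, tame $(1+q_\lambda\rho_\lambda)^{\alpha_2}$ via $E(\rho_\lambda)<1$, and sum the geometric series. The only cosmetic difference is how you bound $1/\varphi(\rho_\lambda^{-1})\le 1$: the paper observes directly that $\varphi(\rho_\lambda^{-1})\ge(\rho_\lambda^{-1})^2>1$ from the $n=0$ term in the definition of $\varphi$ (using $M_0=1$ and $\rho_\lambda<1$), whereas you route through Lemma~\ref{lem:PropertyVarphiI} and log-convexity to reach the same inequality.
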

\begin{proof}
{From the definitions of $F_\Gamma$ and} $G_{\Gamma}$, we get
\[ 
G_\Gamma(r,\theta)=\sum_{\lambda\in\Lambda}\frac{g_{q_\lambda,\rho_\lambda}(r,\theta)}{2^\lambda \varphi(\rho_{\lambda}^{-1})}, 
\]
so it follows from Lemma \ref{lem: g der estiamtes} that
\[  
\left|\partial^\alpha G_{\Gamma}(r,\theta)\right|  \leq C^{|\alpha|+1}\alpha! M_{|\alpha|}\sum_{\lambda\in\Lambda}\frac{1}{2^\lambda \varphi(\rho_{\lambda}^{-1})}\left( 1 + q_\lambda\rho_\lambda \right)^{\alpha_2} .
\]
Since $q_\lambda\rho_\lambda=E(\rho_\lambda)$, which is bounded by $1$, and $\varphi(\rho^{-1}_{\lambda}) \geq 1$ (because $\rho_{\lambda}<1$), we obtain
\[  
\left|\partial^\alpha G_R(r,\theta)\right|  \leq (2C)^{|\alpha|+1}\alpha! M_{|\alpha|}. \qedhere
\]
\end{proof}

\begin{lemma}[Lower estimates on $F_{\Gamma}$]\label{lem: upper bound derivative}
Suppose that the class ${C}_M$ is closed {under} differentiation {and properly contains the analytic functions}. Then
there is a constant $B>0$ (depending only on $M$) {which satisfies the following property: Assume} 
there exists $\lambda_0\in \Lambda$ such that $\delta_\lambda \geq  B M_{\lambda}^{-1/\lambda}$ when $\lambda\geq \lambda_0$ and 
$\lambda\in \Lambda \subset 2\mathbb{N}$.
Then there exists an infinite sequence of points $x_\lambda\to 0$ and a constant $\epsilon>0$ such that 
\[
|\partial^\lambda_{{x_2}} F_\Gamma(x_\lambda)| \geq \frac{\epsilon^{\lambda}\lambda!M_\lambda M_\lambda}{4^\lambda}, \quad \text{ for }\lambda \geq \lambda_0 \text{ and }\lambda \in \Lambda \subset 2\mathbb{N}.
\]
\end{lemma}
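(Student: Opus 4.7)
The plan is to evaluate $\partial^\lambda_{x_2} F_\Gamma$ at the bump centres themselves, $x_\lambda:=p_\lambda=(E(\rho_\lambda),0)=(\rho_\lambda q_\lambda,0)$. These tend to $0$ since $E(0)=0$ and $E$ is continuous. Split the derivative into the diagonal and off-diagonal contributions,
\[
\partial^\lambda_{x_2}F_\Gamma(x_\lambda)=\frac{\partial^\lambda_{x_2}f_{q_\lambda,\rho_\lambda}(p_\lambda)}{2^\lambda\varphi(\rho_\lambda^{-1})}+\sum_{\lambda'\in\Lambda\setminus\{\lambda\}}\frac{\partial^\lambda_{x_2}f_{q_{\lambda'},\rho_{\lambda'}}(p_\lambda)}{2^{\lambda'}\varphi(\rho_{\lambda'}^{-1})},
\]
and derive the conclusion by a reverse triangle inequality once the first term is shown to dominate.

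For the main term I apply Remark \ref{lem: f estimates}(ii) with $n=\lambda/2$ (legitimate because $\lambda\in 2\mathbb{N}$), giving $|\partial^\lambda_{x_2}f_{q_\lambda,\rho_\lambda}(p_\lambda)|\geq \lambda!\,M_\lambda/(2^\lambda\rho_\lambda^\lambda)$. Combining with Lemma \ref{lem:PropertyVarphiI}, which identifies $\varphi(\rho_\lambda^{-1})=\varphi(m_\lambda)=1/(\rho_\lambda^{\lambda+2}M_\lambda)$, the main term is bounded below by $\lambda!\,M_\lambda^{2}\rho_\lambda^{2}/4^\lambda$. The apparent loss $\rho_\lambda^{2}$ is harmless since closure of $C_M$ under differentiation (condition \eqref{eq:ClosedByDiff}) provides a constant $A>0$ with $m_\lambda\leq A^\lambda$, hence $\rho_\lambda^{2}\geq A^{-2\lambda}$; this will be absorbed into the target factor $\epsilon^\lambda$.

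For the tail, each summand is controlled by Remark \ref{lem: f estimates}(i) evaluated at $p_\lambda$. The crucial manipulation is the elementary inequality $(a^{2}+b^{2})^{1+\lambda/2}\geq b^{2}a^{\lambda}$, applied with $a=\|p_\lambda-p_{\lambda'}\|\geq \delta_\lambda$ and $b=\rho_{\lambda'}$: this makes the $\rho_{\lambda'}^{2}$ in the numerator of (i) cancel exactly. Using moreover the log-convexity estimate $M_{\lambda'}\rho_{\lambda'}^{\lambda'+2}\leq\rho_{\lambda'}^{2}\leq 1$ (which follows from the monotonicity of $(M_k^{1/k})$ noted after Definition \ref{def:DC}), each tail term is bounded by $C_0^{\lambda+1}\lambda!\,M_\lambda/(2^{\lambda'}\delta_\lambda^{\lambda})$ for a universal $C_0$, and summing the geometric series in $\lambda'$ yields a bound of the form $C_1^{\lambda}\lambda!\,M_\lambda/\delta_\lambda^{\lambda}$. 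Inserting the hypothesis $\delta_\lambda^\lambda\geq B^\lambda/M_\lambda$ converts this into $(C_1/B)^{\lambda}\lambda!\,M_\lambda^{2}$.

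Choosing $B$ large enough (essentially $B\gtrsim C_1 A^2$, which depends only on $M$) makes the tail at most half the main term for every $\lambda\geq\lambda_0$, $\lambda\in\Lambda$. The reverse triangle inequality then gives
\[
|\partial^\lambda_{x_2}F_\Gamma(p_\lambda)|\geq \tfrac12\cdot\frac{\lambda!\,M_\lambda^{2}\rho_\lambda^{2}}{4^\lambda}\geq \frac{\lambda!\,M_\lambda M_\lambda}{2(4A^{2})^{\lambda}},
\]
which is the required estimate with $\epsilon$ of order $A^{-2}$. The main obstacle is the tail bound: without the decomposition $(a^{2}+b^{2})^{1+\lambda/2}\geq b^{2}a^{\lambda}$ that cancels the $\rho_{\lambda'}^{2}$ in Remark \ref{lem: f estimates}(i), one cannot obtain a bound on the tail that is uniform in $\lambda'$ and comparable to the main term.
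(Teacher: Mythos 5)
Your proposal is correct and follows essentially the same approach as the paper's proof: same choice of evaluation points $x_\lambda=\rho_\lambda q_\lambda$, same reverse-triangle split into main and tail terms, same use of Remark \ref{lem: f estimates}(ii) together with Lemma \ref{lem:PropertyVarphiI} for the main term, the same key observation $(a^2+b^2)^{1+\lambda/2}\geq b^2a^\lambda$ (with $a\geq\delta_\lambda$, $b=\rho_{\lambda'}$) to cancel the $\rho_{\lambda'}^2$ from Remark \ref{lem: f estimates}(i) in the tail, and the same use of closure under differentiation to absorb $\rho_\lambda^2$ into $\epsilon^\lambda$. The only cosmetic difference is that you estimate $1/\varphi(\rho_{\lambda'}^{-1})$ via $M_{\lambda'}\rho_{\lambda'}^{\lambda'+2}\leq 1$, whereas the paper simply uses $\varphi(\rho_{\lambda'}^{-1})\geq 1$; both are fine.
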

\begin{proof}
Consider the sequence of points $x_n=\rho_n q_n$, which converges to the origin {since $(\rho_n)$ tends to zero}. Note that
\[
|\partial^\lambda_{x_2} F_{\Gamma}(x_\lambda)| \geq \frac{1}{2^\lambda\varphi(\rho_{\lambda}^{-1})}|\partial^\lambda_{x_2} f_{q_\lambda,\rho_{\lambda}}({x}_\lambda)| 
- \left| {\sum_{\lambda^\prime\neq\lambda} \frac{\partial^{\lambda'}_{x_2} f_{q_{\lambda'},\rho_{\lambda'}}({x}_{\lambda'})}{2^{\lambda'}\varphi(\rho_{\lambda^\prime}^{-1})} } \right|.
\]
If follows from Remark \ref{lem: f estimates} and Lemma \ref{lem:PropertyVarphiI} that
\[
\begin{aligned}
|\partial^\lambda_{{x_2}} F_{\Gamma}(x_\lambda)| &\geq \lambda!\frac{M_\lambda}{(2\rho_\lambda)^\lambda \varphi(\rho_{\lambda}^{-1})} - \frac{\lambda! M_{\lambda} {8}^{\lambda+3}}{\delta_\lambda^{\lambda}} \sum_{\lambda^\prime\neq\lambda} \frac{1}{2^{\lambda^\prime}}\\
& \geq\frac{\lambda!\rho_{\lambda}^2M_\lambda M_\lambda}{{2^{\lambda}}} - \frac{\lambda! M_{\lambda}{8}^{\lambda+3}}{\delta_\lambda^{\lambda}} \,.
\end{aligned}
\]

Now, since the class is closed under differentiation, it follows from the criterion \eqref{eq:ClosedByDiff} that there exists $\epsilon \in (0,1]$ such that $\rho_{\lambda}^2 M_\lambda M_{\lambda} \geq \epsilon^{\lambda} M_\lambda M_{\lambda}$. Let $B= {8^5}\epsilon^{-1}$. Under the hypothesis of the lemma, for $\lambda\geq \lambda_0$,
\[
|\partial^\lambda_{x_2} F_{\Gamma}(x_\lambda)| \geq \frac{\epsilon^{\lambda}\lambda!M_\lambda M_\lambda}{{2^{\lambda}}} - \frac{\epsilon^{\lambda}\lambda!M_\lambda M_\lambda}{{8}^\lambda}\geq \frac{\epsilon^{\lambda}\lambda!M_\lambda M_\lambda}{4^\lambda}\,. \qedhere
\]
\end{proof}

\subsection{Proofs of the main Theorems \ref{thm:Main} and \ref{thm:Main2}}\label{ssec:proofMain}

The main technical result of this section is the following {lemma}.

\begin{lemma}\label{cl:Main}
Let $M$ denote a log convex sequence with $M_0=1$, such that $C_M$ is a Denjoy-Carleman class closed 
under differentiation, {which properly contains the class of analytic functions}. Let $\Lambda' \subset \mathbb{N}$ be an infinite {set of indices such that}
\[
\inf_{{k \in \Lambda'}}\left\{  \left( \frac{M_{k}^2}{M_{2k}} \right)^{1/k}   \right\} >0.
\]
Then there exist {an infinite} subset $\Lambda \subset \Lambda'$, a constant $\mathcal{K} \in \mathbb{R}_{> 0}$, a sequence of points $(x_\lambda)_{\lambda \in \Lambda} \in \mathbb{R}^2$ {tending to} $(0,0)$, and a function $F \in C_{M^{(2)}}(\mathbb{R}^2)$, such that $F \circ \sigma \in C_{M}(\mathbb{R}^2)$ and
\[
|\partial^\lambda_{x_2} F(x_\lambda)| \geq \mathcal{K}^{\lambda} \lambda! M_{2\lambda}, \quad 
\text{ for all }\, \lambda \in \Lambda.
\]
\end{lemma}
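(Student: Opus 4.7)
The plan is to construct $F$ as a function $F_\Gamma$ of the type introduced in \S\ref{ssec:Flat}, for a carefully chosen triple $\Gamma=(\Lambda,\rho_n,q_n)$ and auxiliary function $E$. Lemmas~\ref{lem: Cinfty} and \ref{lem: G upper bound} will then automatically give $F_\Gamma\in C_{M^2}\subseteq C_{M^{(2)}}(\mathbb{R}^2)$ and $F_\Gamma\circ\sigma\in C_M(\mathbb{R}^2)$, while Lemma~\ref{lem: upper bound derivative} will produce the derivative lower bound once the separation hypothesis $\delta_\lambda\geq B M_\lambda^{-1/\lambda}$ is verified. The inf hypothesis will then upgrade the $M_\lambda^2$ factor coming out of Lemma~\ref{lem: upper bound derivative} into the required $M_{2\lambda}$.

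I would first reduce to $\Lambda'\subseteq 2\mathbb{N}$; this is necessary because Proposition~\ref{prop: h def}(ii), and hence Lemma~\ref{lem: upper bound derivative}, only provide lower bounds at even derivative orders. If $\Lambda'\cap 2\mathbb{N}$ is infinite, restrict to it; otherwise replace each odd $\lambda\in\Lambda'$ by $\lambda+1$, noting that \eqref{eq:ClosedByDiff} and log-convexity preserve the inf condition up to a positive constant factor. Let $c_0>0$ be such that $M_\lambda^2\geq c_0^\lambda M_{2\lambda}$ for all $\lambda\in\Lambda'$. Since $C_M$ properly contains the analytic functions, $M_\lambda^{1/\lambda}\to\infty$ and $\rho_\lambda\to 0$, so I can extract inductively a sparse $\Lambda=\{\lambda_k\}\subseteq\Lambda'$ satisfying $M_{\lambda_k}^{1/\lambda_k}\geq B\cdot 2^{k+1}$ (where $B$ is the constant of Lemma~\ref{lem: upper bound derivative}) and $\rho_{\lambda_k}\leq 2^{-2k}$. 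I would then define $E:[0,\infty)\to[0,1)$ as a continuous increasing function with $E(\rho_{\lambda_k})=a_k:=2^{-k}$ by linearly interpolating between consecutive prescribed points, setting $E(0)=0$, and extending monotonically past $\rho_{\lambda_1}$. The asymptotic $E(r)/r\to\infty$ as $r\to 0^+$ holds because the minimum of $E(r)/r$ on each $[\rho_{\lambda_{k+1}},\rho_{\lambda_k}]$ is bounded below by $\min(a_k/\rho_{\lambda_k},\,a_{k+1}/\rho_{\lambda_{k+1}})\geq 2^k$, and the geometric spacing $a_{k+1}=a_k/2$ yields the separation $\delta_{\lambda_k}=a_{k+1}=2^{-(k+1)}\geq B M_{\lambda_k}^{-1/\lambda_k}$.

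Invoking Lemma~\ref{lem: upper bound derivative} then produces a sequence $x_\lambda\to 0$ and a constant $\epsilon>0$ with $|\partial^\lambda_{x_2}F_\Gamma(x_\lambda)|\geq\epsilon^\lambda\lambda!\,M_\lambda^2/4^\lambda$ for $\lambda\in\Lambda$; combined with $M_\lambda^2\geq c_0^\lambda M_{2\lambda}$, this gives the desired bound with $\mathcal{K}=\epsilon c_0/4$. The main obstacle in this strategy is the joint construction of $\Lambda$ and $E$, which must reconcile three competing constraints: the global asymptotic $E(r)/r\to\infty$ required by \S\ref{ssec:Flat}, the pointwise lower bound $E(\rho_{\lambda_k})\geq 2B M_{\lambda_k}^{-1/\lambda_k}$, and the separation requirement $\delta_{\lambda_k}\geq B M_{\lambda_k}^{-1/\lambda_k}$. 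The resolution is that $\Lambda$ may be taken arbitrarily sparse within $\Lambda'$, which is possible precisely because $C_M$ strictly contains the analytic functions; the simple geometric prescription $a_k=2^{-k}$ then satisfies all three constraints simultaneously.
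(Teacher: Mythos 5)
Your Step I (when $\Lambda'\cap 2\mathbb{N}$ is infinite) is correct and is in substance the same as the paper's, with one minor inversion of order: you fix the sparse $\Lambda\subset\Lambda'$ first (with explicit controls $M_{\lambda_k}^{1/\lambda_k}\ge B\cdot 2^{k+1}$ and $\rho_{\lambda_k}\le 2^{-2k}$) and then build $E$ to interpolate the geometric sequence $a_k=2^{-k}$, whereas the paper fixes an arbitrary $E$ as in \eqref{eq:E-Estimate} first and then extracts $\Lambda$ sparse enough that $E(\rho_\lambda)>2E(\rho_{\lambda'})$ for $\lambda<\lambda'$. Both orders are valid, and your verification that $E(r)/r\to\infty$ (using the fact that $E(r)/r$ is monotone on each affine piece, so its minimum on $[\rho_{\lambda_{k+1}},\rho_{\lambda_k}]$ is $\min\{a_k/\rho_{\lambda_k},a_{k+1}/\rho_{\lambda_{k+1}}\}\ge 2^k$) is sound. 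The derivation $|\partial^\lambda_{x_2}F_\Gamma(x_\lambda)|\ge\epsilon^\lambda\lambda!M_\lambda^2/4^\lambda\ge(\epsilon c_0/4)^\lambda\lambda!M_{2\lambda}$ is also fine.

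There is, however, a genuine gap in your treatment of the case where $\Lambda'\cap 2\mathbb{N}$ is finite. You write ``replace each odd $\lambda\in\Lambda'$ by $\lambda+1$'' and observe that the inf condition is preserved; that observation is correct, but it does not close the argument. After the replacement you are working with the shifted set $\widetilde{\Lambda}'=\{\lambda+1:\lambda\in\Lambda'\}$, which is \emph{not} a subset of $\Lambda'$, and the lower bounds you obtain from Lemma~\ref{lem: upper bound derivative} land at derivative orders $\lambda+1$, not $\lambda$. The lemma's conclusion demands an infinite $\Lambda\subset\Lambda'$ and the estimate $|\partial^\lambda_{x_2}F(x_\lambda)|\ge\mathcal{K}^\lambda\lambda!M_{2\lambda}$ at the original indices $\lambda$; this matters downstream, since in the proofs of Theorems \ref{thm:Main} and \ref{thm:Main2} one needs $\Lambda$ to lie inside the set where $(N_k/M_{2k})^{1/k}\to 0$. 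The missing idea is the differentiation trick of the paper's Step II: apply your Step I construction to $\widetilde{\Lambda}'$ to get $\widetilde{F}$, then set $F:=\partial_{x_2}\widetilde{F}$ and $\Lambda:=\{\widetilde{\lambda}-1:\widetilde{\lambda}\in\widetilde{\Lambda}\}\subset\Lambda'$. Then $\partial^\lambda_{x_2}F=\partial^{\lambda+1}_{x_2}\widetilde{F}$ converts the $(\lambda+1)$-order estimate into a $\lambda$-order estimate, and log-convexity plus \eqref{eq:ClosedByDiff} absorb the shift from $M_{2\lambda+2}$ to $M_{2\lambda}$. One also has to verify that this $F$ still has the required regularity: $F\in C_{M^{(2)}}$ because $C_{M^{(2)}}$ is closed under differentiation; and $F\circ\sigma\in C_M$ follows from the identity $F\circ\sigma=\frac{1}{r}\bigl(r\sin\theta\,\partial_r(\widetilde{F}\circ\sigma)+\cos\theta\,\partial_\theta(\widetilde{F}\circ\sigma)\bigr)$ together with closure of $C_M$ under differentiation and division by a monomial. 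Your proposal omits this entire step.
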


\begin{proof} 
{Let $E$ denote a function as in \eqref{eq:E-Estimate}, and let $\rho_n$ and $q_n$ denote
the sequences given by \eqref{eq:seqs}.} Let $B>0$ be the constant {(depending only on $M$)}
given by Lemma \ref{lem: upper bound derivative}. The proof {of the lemma is divided into two steps:}

\medskip
\noindent
\emph{Step I.} Suppose that $\Lambda' \cap 2 \mathbb{N}$ is an infinite set. We {construct  
$\Lambda$ satisfying the assumption} in Lemma \ref{lem: upper bound derivative}: {By the hypothesis
above} and equation \eqref{eq:ComparisonConditions}, 
\begin{equation}\label{eq:NecessaryConditionQuasi}
{\xi :=} \inf_{\lambda \in \Lambda^{\prime}} \left\{\frac{\left(M_\lambda\right)^{{1+ 1/\lambda}}}{M_{\lambda+1}} \right\} >0,\quad \text{and} \quad  \inf_{\lambda \in \Lambda^{\prime}} \left\{ \left(\frac{M_{\lambda}^2}{M_{2\lambda}}\right)^{{1/\lambda}} \right\} >0.
\end{equation}
Since the sequence $(\rho_n)_{n\ge 1}$ is {non-increasing and tends} to zero, we can choose {infinite}
$\Lambda\subset\Lambda^\prime$ {sparse enough} that $E(\rho_\lambda)>2E(\rho_{\lambda^\prime})$, for any $\lambda,
 \lambda^\prime\in\Lambda$ such that $\lambda<\lambda^\prime$. We fix such $\Lambda$, and {will
prove that it satisfies the assumption} in Lemma \ref{lem: upper bound derivative}. Indeed, for fixed $\lambda\in \Lambda$, let $\lambda_-$ and $\lambda_+$ denote the smaller and larger neighbours of $\lambda$ (respectively); i.e., 
 \[ 
 \lambda_-:=\max \left(\Lambda \cap (0,\lambda)\right),\quad \lambda_+:=\min \left(\Lambda \cap (\lambda,+\infty)\right). 
 \]
 {Let 
 \[
  \delta_\lambda :=\min\{E(\rho_{\lambda_-})-E(\rho_\lambda)\;,\; E(\rho_\lambda)-E(\rho_{\lambda_+}) \}.
 \]
 Since $(\rho_n)_{n\ge 1}$ is {non-increasing},
 \[ 
\delta_\lambda \geq \frac{1}{2} \min\{E(\rho_{\lambda_-}), E(\rho_\lambda) \}=\frac{E(\rho_\lambda)}{2}. 
 \]}
{By} \eqref{eq:E-Estimate} and the first {condition} in \eqref{eq:NecessaryConditionQuasi}, there exists $\lambda_0$ such that
\[
\delta_{\lambda} \geq  \frac{E(\rho_\lambda)}{2} > B \xi^{-1}\rho_\lambda =  B \xi^{-1} \frac{M_\lambda}{M_{\lambda+1}} \geq \frac{B}{M_{\lambda}^{1/\lambda}},  \quad \text{ for } \lambda\geq \lambda_0,
\]
showing that the the {assumption} in Lemma \ref{lem: upper bound derivative} is satisfied. 

Finally, consider the function $F_{\Gamma}$ {determined by the}
data $(\Lambda,\rho_n,q_n)$. By Lemmas \ref{lem: Cinfty} and \ref{lem: G upper bound},
$F \in C_{M^2}(\mathbb{R}^2) \subset C_{M^{(2)}}(\mathbb{R}^2)$ and $F_{\Gamma} \circ \sigma \in C_M(\mathbb{R}^2)$. By Lemma \ref{lem: upper bound derivative} and the the second {condition} in \eqref{eq:NecessaryConditionQuasi}, there exists $\epsilon>0$ and a sequence of points $x_{\lambda} \to 0$ such that
\[
|\partial^\lambda_y F_\Gamma(x_\lambda)| \geq \frac{\epsilon^{\lambda}\lambda!M_\lambda M_\lambda}{4^\lambda} \geq \frac{\epsilon^{\lambda}\lambda!M_{2\lambda}}{8^\lambda}, \quad \text{ for }\lambda \geq \lambda_0,
\]
as we wanted to prove.

\medskip
\noindent
\emph{Step II.} Suppose that $\Lambda' \cap 2 \mathbb{N}$ is a finite set. Consider the set of indexes $\widetilde{\Lambda}' = \{\lambda + 1:  \lambda \in \Lambda'\}$ and note that $\widetilde{\Lambda}' \cap 2\mathbb{N}$ is infinite. It follows from the Step I that there exist {an infinite}
subset $\widetilde{\Lambda} \subset \widetilde{\Lambda}'$, a constant $\widetilde{\mathcal{K}} \in \mathbb{R}_{> 0}$, a sequence of points $(x_{\widetilde{\lambda}})_{\widetilde{\lambda} \in \widetilde{\Lambda}} \in \mathbb{R}^2$, and a function $\widetilde{F} \in C_{M^{(2)}}(\mathbb{R}^2)$,
such that $F \circ \sigma \in C_{M}(\mathbb{R}^2)$ and
\[
|\partial^\lambda_{x_2} F(x_{\widetilde{\lambda}})| \geq \widetilde{\mathcal{K}}^{\lambda} \lambda! M_{2\lambda}, \quad \text{ for all }\, \lambda \in \widetilde{\Lambda}.
\]
Let $\Lambda:=\{\lambda-1; \, \lambda \in \widetilde{\Lambda}\} \subset \Lambda'$ and set $F := \partial_{x_2}\widetilde{F}$. Since $C_{M^{(2)}}$ is closed under differentiation (cf. \S\ref{subsec:shift}), it follows that $F \in C_{M^{(2)}}(\mathbb{R}^2)$. Note that
\[
F\circ \sigma = \frac{1}{r} \left( r\sin\theta\,\partial_r(\widetilde{F}\circ \sigma) + \cos\theta\,\partial_{\theta}(\widetilde{F}\circ \sigma) \right);
\]
it follows that $F\circ \sigma \in C_M$, since $\widetilde{F}\circ \sigma \in C_M$ and $C_M$ is closed under differentiation and division by a monomial. Finally, note that the point $x_{\lambda}$ is well-defined for every 
$\lambda \in \Lambda$, and
\[
|\partial^\lambda_{x_2} F(x_\lambda)| = |\partial^{\lambda+1}_{x_2} \widetilde{F}(x_{\widetilde{\lambda}})|   \geq \widetilde{\mathcal{K}}^{\lambda+1} (\lambda+1)! M_{2\lambda+2} \geq \mathcal{K}^{\lambda} \lambda! M_{2\lambda}.
\]
for some $\mathcal{K}>0$; {this completes the proof of the second step and of the lemma}.
\end{proof}

We {can now prove the main theorems.}

\begin{proof}[Proof of Theorem \ref{thm:Main}] {The result is trivial when $C_M$ is the class of analytic functions, so suppose that $C_M$ properly contains the analytic function and consider} $C_{N} \subsetneq C_{M^{(2)}}$. 
{By hypothesis,  $C_{M^2} = C_{M^{(2)}}$.}
{By the criterion of \S\ref{ssec:Comparison}(b)} applied to $C_N$ and $C_{M^2}$,
there exists {an {infinite} subset $\Lambda' \subset \mathbb{N}$ such that}
\[
{\lim_{k \to \infty,\, k \in \Lambda'}}\left\{  \left( \frac{N_{k}}{M_{2k}} \right)^{1/k}   \right\} = 0 \quad \text{ and } \quad \inf_{{k \in \Lambda'}}\left\{  \left( \frac{M_{k}^2}{M_{2k}} \right)^{1/k}   \right\} >0 .
\]
The result follows easily from Lemma \ref{cl:Main}.
\end{proof}

\begin{proof}[Proof of Theorem \ref{thm:Main2}] {The result is trivial when $C_M$ is the class of analytic functions, so suppose that $C_M$ properly contains the analytic function and consider} $C_{N} \subsetneq C_{M^{(2)}}$ such that $\lim_{k\to \infty} \left(N_k/M_{2k}\right)^{1/k} =0$.
By Lemma \ref{lem: extrareg2}, there exists {an infinite subset of indices
$\Lambda'\subset \mathbb{N}$ such that}
\[
\inf_{{k \in \Lambda'}}\left\{  \left( \frac{M_{k}^2}{M_{2k}} \right)^{1/k}   \right\} >0,
\]
{whereas} $\inf_{{k \in \Lambda'}}\left\{  \left( N_{k}/M_{2k} \right)^{1/k}   \right\} = 0$. The result {follows} easily from Lemma \ref{cl:Main}.
\end{proof}

\section{Proof of Proposition \ref{prop:Difference}}\label{sec:ProofProp}
\noindent
Fix a positive increasing sequence $(v_k)$ such that
$$
v_k\to\infty,\quad \frac{v_{k+1}}{v_k}\uparrow 1\quad \text{and}\quad  v_{2k}/v_k\uparrow\infty;
$$
for example, we may take $v_k = \prod_{j\leq k}(1+1/\sqrt{j})$, for every $k\in \mathbb{N}$. 
Let {$\Lambda=(\lambda_n)$  be an increasing sequence of nonnegative} even integers
with $\lambda_0=0$, such that 
$$
|\Lambda\cap2\cdot\Lambda|=\infty,\quad\text{and}\quad \sum_{n>0}\frac{\log \lambda_{n+1}-\log\lambda_{n}}{ v_{n}^2}=\infty.
$$
{For every $k\in\IN$ such that} $\lambda_n\leq k<\lambda_{n+1}$, set $L_k=v_n$.  Finally, set 
$M_k=L_k^k$. {Let us verify that $C_M$ is the class sought:}

\medskip
\noindent
\emph{$M$ is a log convex sequence.} Indeed, let $a_k= \log M_{k}-\log M_{k-1}$. By direct computation,
\[
\begin{aligned}
a_{k} &=  \log(v_{n}), & \text{ if } &\lambda_n< k\leq \lambda_n;\\
a_{k} &=  \log(v_{n})+(k-1)\log\left(\frac{v_n}{v_{n-1}}\right),  & \text{ if } &k=\lambda_n.
\end{aligned}
\]
{This implies that $(a_k)$ is an increasing sequence, and the result follows}.

\medskip
\noindent
\emph{$C_M$ is closed under differentiation.} Let 
\[
b_k=\left(\frac{M_{k+1}}{M_k}\right)^{1/k}=\, \frac{L_{k+1}}{L_k}\cdot L_{k+1}^{1/k}.
\]
{It is enough} to show that $b_k$ is bounded from above. By the inequality of arithmetic and geometric means, $b_k\leq \left(L_{k+1} / L_k\right)^2$, and direct computation shows that
\[
\begin{aligned}
\left(\frac{L_{k+1}}{L_k}\right)^2 &= 1,   & \text{ if } &\lambda_n< k+1\leq \lambda_n;\\[.5em]
\left(\frac{L_{k+1}}{L_k}\right)^2 &=  \left(\frac{v_{n}}{v_{n-1}}\right)^2,  & \text{ if } &k+1=\lambda_n,
\end{aligned}
\]
so there is indeed an upper bound.

\medskip
\noindent
\emph{$C_{M^{(2)}}$ is quasianalytic.} Since $L_n$ is non-decreasing,
\[ 
\sum_{n\geq 1}\frac{1}{n L_{2n}^2}\ge\sum_{n\geq 1}\frac{1}{ L_{2^{n+1}}^2} = \sum_{n\geq 1}\,\sum_{\lambda_n\leq 2^{k+1}<\lambda_{n+1}}\frac{1}{ L_{n}^2}\geq\sum_{n\geq1} \frac{\log_2 \lambda_{n+1}-\log_2 \lambda_{n}}{v^2_{n}}.  
\]
By assumption,  the sum on the right hand side is divergent.

\medskip
\noindent
\emph{$C_{M^2}\subsetneq C_{M^{(2)}}$.} Observe that 
\[
\left( \frac{M_k^2}{M_{2k}}\right)^{1/k}= \left(\frac{L_k}{L_{2k}} \right)^2. 
\]

Since $\Lambda\cap2\Lambda$ is an infinite and $v_k/v_{2k}\to 0$, {we see that} 
zero is a partial limit of the sequence above. 
By construction, the partial limits of this sequence are 0 and 1. In particular, 
\[
\limsup_{k\to \infty} \frac{M_n^2}{M_{2n}} \leq1 \quad \text{and} \quad  \liminf_{k\to \infty} \frac{M_n^2}{M_{2n}} =0. 
\]
{The assertion follows from the criterion \S\ref{ssec:Comparison}(b).
\qed}

\end{document}